\newtheorem{theorem}{Theorem}[section]
\newtheorem{proposition}[theorem]{Proposition}
\newtheorem{lemma}[theorem]{Lemma}
\newcommand{\R}{\mathbb{R}}
\newcommand{\eps}{\varepsilon}
\newcommand{\beq}{\begin{equation}}
\newcommand{\eeq}{\end{equation}}
\def\XXint#1#2#3{{\setbox0=\hbox{$#1{#2#3}{\int}$ }
\vcenter{\hbox{$#2#3$ }}\kern-.6\wd0}}
\newcommand{\io}{\int_{\Omega}}
\newcommand{\n}{D}
\newcommand{\vf}{\varphi}
\newcommand{\Om}{\Omega}
\newcommand{\f}{\Phi}
\DeclareMathOperator{\Div}{div}
\title[Finsler $p$-Laplace type equations]{Concave solutions to Finsler $p$-Laplace type equations}
\author[S. Mosconi]{Sunra Mosconi}
\author[G. Riey]{Giuseppe Riey}
\author[M. Squassina]{Marco Squassina}
\thanks{2010 Mathematics Subject Classification: 35J92, 35B33, 35B06}
\keywords{Nonsmooth analysis, anisotropic problems, convexity of solutions}
\thanks{\rm
All authors are members of the GNAMPA group of the Istituto Nazionale di Alta Matematica (INdAM) and they are supported by it.
The first author is supported by GNAMPA project CUP E53C22001930001, PRIN project 2022ZXZTN2 and PIACERI line 2 and 3.
The second author is supported also by PRIN PNRR ``Linear and Nonlinear PDE's: New directions and Applications''.
The third author is supported by Princess Nourah bint Abdulrahman University Researchers Supporting
Project number (PNURSP-HC2023/3), Princess Nourah bint Abdulrahman University, Saudi Arabia}
\address[S.\ Mosconi]{Department of Mathematics and Computer Science
    \newline\indent
    University of Catania
    \newline\indent
    Viale A. Doria 6, I-95125 Catania, Italy}
\email{sunra.mosconi@unict.it}
\address[G.\ Riey]{Dipartimento di Matematica e Informatica,
    \newline\indent
    Universit\`a della Calabria
    \newline\indent
    Ponte Pietro Bucci 31B, I-87036 Arcavacata di Rende, Cosenza, Italy}
\email{giuseppe.riey@unical.it}
\address[M.\ Squassina]{College of Science
    \newline\indent
    Princess Nourah Bint Abdul Rahman University
    \newline\indent
    Saudi Arabia, Riyadh, PO Box 84428}
\email{marsquassina@pnu.edu.sa}
\address[M.\ Squassina]{Dipartimento di Matematica e Fisica
    \newline\indent
    Universit\`a Cattolica del Sacro Cuore
    \newline\indent
    Via della Garzetta 48, I-25133 Brescia, Italy}
\email{marco.squassina@unicatt.it}
\begin{document}

\begin{abstract}
We prove concavity properties for solutions to anisotropic quasi-linear equations,
extending previous results known in the Euclidean case.
We focus the attention on nonsmooth anisotropies and in particular
we also allow the functions describing the anisotropies to be not even.
\end{abstract}

\maketitle

\begin{center}
    \begin{minipage}{9cm}
        \small
        \tableofcontents
    \end{minipage}
\end{center}

\section{Introduction}\label{intro}

A natural question in the framework of nonlinear   elliptic PDEs is
whether a solution
inherits some qualitative properties from it domain of definition.
Starting from \cite{GNN79} 
extensive research has been developed in order to deduce symmetry of solutions 
from the symmetry of the domain, via the so called Alexandroff-Serrin moving plane method.
But when the symmetry of the domain is dropped, one may wonder if the solutions still exhibit
some convexity properties just from the convexity of their domain.
As it turns out, classical concavity of, say, positive solutions with zero Dirichlet boundary conditions is often rather demanding:
it can be achieved for the torsion problem
\beq
\label{tor}
\begin{cases}
-\Delta u=1 &\text{in $\Omega$}\\
u=0&\text{on $\partial\Omega$},
\end{cases}
\eeq
 for example,
only for convex $\Omega$ which are suitably small perturbations of ellipsoids \cite{HNST18} (see also \cite{CW}) 
and the first eigenfunctions of the Dirichlet Laplacian are actually never concave, in any convex domain \cite[Remark 3.4]{Kaw85R}.
One may instead look for {\em quasi-concavity} of positive solutions in convex domains, meaning that all their  super-level sets are convex. This is usually accomplished by requiring that for a suitable strictly increasing functions $\varphi$
the composition $\varphi(u)$ is concave, a property called {\em $\varphi$-concavity} of $u$.
Indeed, in the seminal paper \cite{ML71}  
it is shown that the solution of the torsion problem \eqref{tor}  for $\Omega$ convex is such that
$\sqrt{u}$ is concave and in \cite{BL76}  
the authors show that the logarithm of a first positive eigenfunctions of the Dirichlet Laplacian is always concave if the domain is convex.
More generally, for the positive solution of
\[
\begin{cases}
-\Delta u=u^\beta &\text{in $\Omega$}\\
u=0&\text{on $\partial\Omega$},
\end{cases}
\]
in a convex $\Omega$, the power $u^{(1-\beta)/2}$ is concave for any $\beta\in [0,1]$,
the $0$-th power being formally identified with $\log u$, see \cite{Ke,Kor83Co}. These last two investigations, based on the so-called {\em  concavity function method}, gave birth to a rich research field on quasi-convexity properties of solutions to PDEs in the eighties, and we refer to \cite{Kaw85R} for the relevant bibliography. The concavity function method was also successfully applied to quasilinear equation of $p$-Laplacian type in \cite{Sa}. Later, a new approach to these problems, the {\em convex envelope method}, was introduced in \cite{ALL} in the framework of viscosity solutions to fully nonlinear PDEs.

\medskip

{\bf Recent contributions}.
These two strategies for investigating convexity properties of solutions to PDEs have been revisited, extended and modified in various ways, see for instance \cite{grecoporru, greco} for the concavity function method and \cite{CS, BS, INS} for the convex envelope method.

More recently, a general class of reactions $f$ ensuring quasi-concavity of the positive solutions of
\beq
\label{genf}
\begin{cases}
-\Delta u=f(u) &\text{in $\Omega$}\\
u=0&\text{on $\partial\Omega$},
\end{cases}
\eeq
 (and more generally of quasi-linear problems
involving the $p$-Laplace operator)  has been singled out in
\cite{BoMoSq1,BoMoSq2}, providing a precise connection on how the quasi-concavity of the solution is affected by the nonlinear term $f$ through the above mentioned $\varphi$. Indeed, the class of continuous increasing $\varphi$ can be partially ordered in a natural way according to their concavity, and one of the goals of \cite{BoMoSq1} was, given $f$ obeying suitable conditions, to determine a "minimally concave" $\varphi$ ensuring $\varphi$-concavity of the solutions of the corresponding quasilinear problem.
Note that, for general positive reactions $f$, quasi-concavity of positive solutions of \eqref{genf} can fail for some smooth convex $\Omega$, as shown in \cite{hns}.

In \cite{AAGS} the optimality of the assumptions of \cite{BoMoSq1} was discussed and
the results were then extended to cover positive solutions to
the quasi-linear problem
\[
-{\rm div}(\alpha(u) D u) + \frac{1}{2} \alpha'(u) |D u|^2 = f(u)
\]
(coupled with zero boundary conditions), related to the so called modified nonlinear Schrödinger equation, under suitable joint
hypothesis on $\alpha$ and $f$.

Another direction recently investigated in the literature is the quasi-concavity of the solutions
if the nonlinearity is perturbed \cite{BucurS} or if the equation is
nonautonomous in the diffusion or on the source term,
like in the second order semilinear problem
\[
-{\rm Tr}\, \left(A(x)\, D^2u\right)=a(x) \, u^\beta,
\]
 see \cite{ABCS}. Finally, {\em strict} quasi-concavity of positive solutions to \eqref{genf} has also been investigated, but turns out to be a much more delicate theme. We refer to \cite{BoMoSq2} for the relevant literature and open questions in the quasilinear case.

\medskip

{\bf Equations considered}. In this paper, given a bounded convex $\Omega\subseteq \R^N$, we investigate  the quasi-concavity of positive solutions of
\begin{equation}\label{eq iniziale}
\left\{
\begin{array}{ll}
-{\rm div}\, \left(\n H(\n u)\right)=f(u) & \hbox{ in } \Om\\
u=0 & \hbox{ on } \partial\Om\,,
\end{array}
\right.
\end{equation}
where
$H$ is a continuous convex $p$-homogenous function for some fixed $p>1$ vanishing only at the origin and $f$ is continuous and fulfils  suitable concavity conditions detailed below.
We are particularly interested in the case where $H$ is not even and possibly non-smooth, in which case \eqref{eq iniziale} requires a suitable variational formulation due to the possible lack of differentiability of $H$.

When $H$ is even, the operator appearing on the right of \eqref{eq iniziale}
is also known as the \emph{Finsler $p$-Laplacian}, since the corresponding kinetic energy
\[
u\mapsto \int_\Omega H(\n u)\, dx
\]
has density which can be expressed as
\[
H(\n u)=\f(\n u)^p
\]
for a suitable Finsler norm $\f$. Clearly, when the norm is the standard Euclidean one, we are reduced to the usual $p$-Laplacian.

This kind of energies can be used to model several anisotropic phenomena,
related for instance to continuum mechanics, image processing and biology \cite{AnInMa,CaHo,EsOs,Gu,PeMa}.
In materials science and chemistry a central role is played
by {\em non-smooth} Finsler norms in order to describe the behavior of crystalline microstructures
\cite{BeNoPa1,BeNoPa2,BeNoRi,Ta1,Ta2}.
Non-smooth Finsler norms are also used in control theory to describe the cost functional in some optimization problems \cite{Dav}.
Moreover, in differential geometry it is possible to consider
non-even energy densities $H$, as for instance those related to
Randers metrics \cite{BaChSh,ChSh}, which have applications also in relativity \cite{Ra}.

Problem \eqref{eq iniziale} and the qualitative properties of its solutions has been thoughtfully investigated in recent years under the assumption that $H$ is smooth and its corresponding Finsler norm has strongly convex unit ball, see for instance
\cite{BeFeKa,CaRiSc,CiFiRo,CoFaVa1,CoFaVa2}. Even under these more stringent assumptions, however, the quasi-concavity of solutions to \eqref{eq iniziale} was not generally  known.

Note that \eqref{eq iniziale} has to be understood, for crystalline non-differentiable energies, in weak sense either as a differential inclusion or as minimisation property of the corresponding energy. This will be made precise in the forthcoming paragraph. The regularity of solutions of \eqref{eq iniziale} is therefore very poor, at best $C^\alpha(\overline\Omega)$ for some $\alpha\in \ ]0, 1[$, and this poses serious issues in the direct applicability of both the aforementioned methods. It is worth noting that even when $H$ is smooth away from the origin, as for instance in the model case of the $p$-Laplacian whose corresponding solutions enjoy better regularity, the convex envelope technique of \cite{ALL} still runs into problems. The very notion of viscosity solution is quite different from the standard one, and  the coincidence of weak and viscosity solutions for general quasilinear degenerate/singular equations is object of contemporary research, mainly built around the ideas of \cite{JJ}. We are not aware of any result of this kind for the general crystalline case we are considering in the present investigation. 
\medskip

{\bf Main result}. \  In order to state our main result, given a continuous $f\in C^0(\R_+)$, set
$$
F(t)=\int_0^t f(s)\, ds
$$
and  let $\vf$ be
\begin{equation}\label{def vf}
\vf(t)=\int_1^tF(s)^{-\frac 1 p}ds\,.
\end{equation}
which is well defined on $]0, M_f]\cap \R$, for
\beq
\label{defM}
M_f=\inf\{t>0: f(t)\le 0\}
\eeq
(in most instances we will actually have $M_f=+\infty$).
Note that in general $\varphi$ may be unbounded near $0$ and also possibly near some  $\bar t>M_f$. In order to deal with possibly non-smooth convex $H$, we restrict to special variational solutions of \eqref{eq iniziale}, namely minimisers of the corresponding energy
\beq
\label{energy}
\left\{w\in W^{1,p}_0(\Omega): w\ge 0\right\}\ni w\mapsto J(w)=\int_\Omega \frac{1}{p}\, H(Dw)-F(w)\, dx.
\eeq
This is not restrictive, since if $H$ is differentiable and $f$ fulfils the additional requirements specified in the statement below,  any positive solution of \eqref{eq iniziale} turns out to minimise \eqref{energy}.
\begin{theorem}\label{teorema 1}

Suppose that $H\in C^0(\R^N)$ is convex, positively $p$-homogeneous and vanishes only at the origin, while $f\in C^\alpha(]0, M_f]), \R)\cap C^0([0, \infty[, \R)$ fulfils  $M_f>0$.
Let $u\in W^{1,p}_0(\Om)$ be a non-negative, nontrivial minimiser for \eqref{energy}.

Assume that
\begin{enumerate}
\item $F^{\frac 1 p}$ is concave and  $F/f$ is convex on $]0,M_f[$
\end{enumerate}
and  one of the following conditions
\begin{enumerate}
\item[($2_H$)]
$H$ is strictly convex
\item[($2_F$)]
$t\mapsto F(t^{1/p})$ is strictly concave.
\end{enumerate}
Then $u\le M_f$ and the function $v=\vf(u)$ is concave in $\Omega$, where $\vf$ is given in \eqref{def vf}.
\end{theorem}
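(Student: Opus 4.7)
The nonsmoothness of $H$ prevents any PDE-level analysis and forces a purely variational approach: one exploits minimality of $u$ against competitors obtained by a $\vf$-sup-convolution, in the spirit of the Borell--Brascamp--Lieb method. As a preliminary reduction one shows $u\le M_f$ a.e.\ by truncation: setting $u^M=\min(u,M_f)$, continuity of $f$ and the definition of $M_f$ give $f\le 0$ on $[M_f,\infty)$, hence $F$ is non-increasing there and $F(u^M)\ge F(u)$; since $\nabla u^M=\chi_{\{u<M_f\}}\nabla u$ a.e.\ and $H(0)=0$, also $H(\nabla u^M)\le H(\nabla u)$. Thus $J(u^M)\le J(u)$, and minimality forces $u\le M_f$.

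Let $v=\vf(u)$ and, for each $\lambda\in(0,1)$, define the $\lambda$-sup-convolution
\[
v_\lambda(x)=\sup\bigl\{\lambda v(y)+(1-\lambda)v(z)\,:\,y,z\in\Om,\ x=\lambda y+(1-\lambda)z\bigr\},\qquad u_\lambda=\vf^{-1}(v_\lambda).
\]
The convexity of $\Om$ makes $u_\lambda$ well-defined on $\Om$, the choice $y=z=x$ yields $u_\lambda\ge u$ pointwise, and one checks that $u_\lambda\in W^{1,p}_0(\Om)$ with $0\le u_\lambda\le M_f$ (boundary vanishing uses that as $x\to\partial\Om$ the admissible pairs $(y,z)$ are squeezed toward $\partial\Om$). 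Proving concavity of $v$ amounts exactly to showing $u_\lambda\equiv u$ for every such $\lambda$.

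The heart of the proof is the energy comparison $J(u_\lambda)\le J(u)$. Since $F$ is non-decreasing on $[0,M_f]$ and $u_\lambda\ge u$, the potential inequality $\int_\Om F(u_\lambda)\,dx\ge\int_\Om F(u)\,dx$ is immediate. The kinetic estimate is more delicate: the $p$-homogeneity of $H$ gives the identity $H(\nabla w)=F(w)\,H(\nabla \vf(w))$ at differentiability points, and for a.e.\ $x\in\Om$ the first-order optimality in the sup defining $v_\lambda$ forces $\nabla v(y)=\nabla v(z)=\nabla v_\lambda(x)$ at any extremal pair $(y,z)$. A Borell--Brascamp--Lieb style change of variables $x\leftrightarrow(y,z)$ then combines two ingredients: the concavity of $F^{1/p}$ controls the way the density $F$ behaves under the $\vf$-mean, while the convexity of $F/f$ governs the non-degeneracy and Jacobian of the selection map, yielding $\int_\Om H(\nabla u_\lambda)\,dx\le\int_\Om H(\nabla u)\,dx$.

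Combining both bounds gives $J(u_\lambda)\le J(u)$, and minimality of $u$ turns every inequality along the way into an equality. Under $(2_H)$, strict convexity of $H$ removes the slack in the kinetic comparison and forces $\nabla u_\lambda=\nabla u$ a.e., whence $u_\lambda=u$; under $(2_F)$, strict concavity of $t\mapsto F(t^{1/p})$ removes the slack in the density estimate, with the same outcome. Since this holds for every $\lambda\in(0,1)$, $v=\vf(u)$ is concave. The principal obstacle is precisely the kinetic inequality in the present low-regularity regime, where $u$ lies only in $W^{1,p}_0(\Om)$, $H$ is merely continuous and convex, and the extremal pair $(y,z)$ is only a measurable set-valued selection; a rigorous implementation will likely require approximation by smooth strictly convex anisotropies $H_\eps$, measurable selection theorems for the sup-convolution, and careful control of the unbounded behaviour of $\vf$ near $\{u=0\}$.
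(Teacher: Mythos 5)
Your approach (a $\varphi$-sup-convolution / Borell--Brascamp--Lieb comparison directly at the energy level) is fundamentally different from the paper's, which regularises $H$ to a smooth strongly elliptic anisotropy, proves an anisotropic Hopf Lemma, introduces a second family of auxiliary integrands $I_\eps$ and then runs the Korevaar--Kennington concavity-function maximum-principle argument on $v_\eps=\varphi(u_\eps)$, passing to the limit twice. Your preliminary reduction $u\le M_f$ by truncation is correct and matches the paper's Lemma \ref{lemmaCj}. The overall logical scheme — construct a competitor $u_\lambda\ge u$, show $J(u_\lambda)\le J(u)$, invoke uniqueness of the minimiser to conclude $u_\lambda=u$ — would indeed imply the theorem if the ingredients held.

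However, there is a genuine gap, and it is not peripheral: the entire mathematical content of your argument is the ``kinetic estimate'' $\int_\Om H(Du_\lambda)\,dx\le\int_\Om H(Du)\,dx$, which you assert but do not prove. The sentence invoking ``a Borell--Brascamp--Lieb style change of variables'' does not constitute an argument: the Pr\'ekopa--Leindler/BBL machinery produces \emph{lower} bounds for integrals of sup-convolutions (which is why the potential inequality $\int F(u_\lambda)\ge\int F(u)$ comes for free from $u_\lambda\ge u$), whereas here you need an \emph{upper} bound on the integral of a nonlinear function of the gradient of the sup-convolution. The identity $H(Du)=F(u)\,H(Dv)$ and the matching of gradients at extremal pairs $(y,z)$ are correct, but they do not by themselves furnish a transport $x\leftrightarrow(y,z)$ with a controllable Jacobian; the selection map is generically set-valued and singular, and it is unclear how the convexity of $F/f$ would provide the non-degeneracy you invoke. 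To my knowledge no such energy monotonicity under $\varphi$-sup-convolution is available even in the model Euclidean $p=2$ case, and the known BBL-type results (Brunn--Minkowski inequalities for $\lambda_1$, torsional rigidity, etc.) compare energies across \emph{different} Minkowski-interpolated domains using the sup-convolution as a trial function on the interpolated set, not as a competitor on a fixed $\Omega$. There is a secondary gap as well — $u_\lambda\in W^{1,p}_0(\Om)$ is nontrivial since $\varphi\to-\infty$ at $0$, so the decay rate of $u$ near $\partial\Om$ must be matched against the blow-up rate of $\varphi$ — but the kinetic estimate is the real obstruction, and you essentially acknowledge in your final sentence that you do not know how to close it. As written, the proposal reduces the theorem to an unproved (and, I believe, false in the stated generality) inequality, so it cannot be accepted as a proof.
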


{\bf Comments on the statement}.
\begin{itemize}
\item
We are not assuming any regularity or evenness hypothesis on $H$ in Theorem \ref{teorema 1}. This lack of regularity and symmetry on $H$ forces us to build suitable tools  such as comparison principles and Hopf type Lemma without relying on PDE arguments and are therefore, as far as we know, new.
\item
Due to the assumed $p$-positive homogeneity of $H$, its strict convexity is equivalent to the strict convexity of $\{H\le 1\}$. Note that in general the {\em strong} convexity of $\{H\le 1\}$ (meaning that the principal curvature of its boundary are positively bounded from below) is required to get classical $C^{1,\alpha}$ regularity of the corresponding solution.
\item
By elementary means, one can show that if $t\mapsto F^{1/p}(t)$ is concave, so is $t\mapsto F(t^{1/p})$, which in turn means that $t\mapsto f(t)/t^{p-1}$ is non-increasing, a common notion in Brezis-Oswald uniqueness type results, cfr. \cite{BO, DiSa}.
\item
 If $H\in C^1(\R^N)$ and $t\mapsto F(t^{1/p})$ is concave it will turn out that any solution of \eqref{eq iniziale} is actually a non-negative minimiser for $J$. This fact can be proved (see \eqref{varchar}) in the more general framework of possibly non-differentiable $H$, by using the notion of {\em energy critical point} developed in Section \ref{SECP} and inspired by \cite{DGM}.
\item
Either of condition {\rm ($2_H$)} and {\rm ($2_F$)} above, coupled with the concavity of $t\mapsto F(t^{1/p})$ implies that the non-negative minimisers of $J$ are essentially unique. More precisely, either $t\mapsto F(t^{1/p})$ is linear and  $u$ is a first Dirichlet positive eigenfunction minimising the corresponding Rayleigh quotient (see \eqref{RQi} below), or $u$ is the unique non-negative minimiser of $J$ on $W^{1,p}_0(\Omega)$. This has been proved in \cite{M}, see Proposition \ref{prouniqueness} for a precise statement.
\item
Existence of non-negative minimisers (or, equivalently, of non-negative critical points) for $J$ can be characterised in terms of the asymptotic behaviour at $0$ and $+\infty$ of $t\mapsto f(t)/t^{p-1}$, assuming it is non-increasing. This is the content of a Brezis-Oswald type result proved in Proposition \ref{proBO} below, which complement the results of \cite{M}.
\item
Assumption ($1$) can be weakened to
\begin{enumerate}
\item[(1')]
\ {\em $t\mapsto f(t)/t^{p-1}$ is non increasing and $t\mapsto e^{(p-1) t}/f(e^t)$ is convex}
\end{enumerate}
allowing to establish $\log$-concavity of $u$. This follows coupling the arguments in \cite{BoMoSq1} with tools developed in this manuscript. Note that   ($1$) implies ($1'$), but in this case if $\varphi$-concavity is strictly stronger that $\log$-concavity.
\item
The assumption $f\in C^\alpha(]0, M_f])$ is a technical one. Indeed, from the convexity of $F/f$ and the positivity of $f$ in $]0, M_f[$ one infers that $f\in {\rm Lip}_{\rm loc}(]0, M_f[)$ so that we actually require a H\"older control at $M_f$ alone.

\end{itemize}

 \medskip

{\bf Applications}.

\begin{itemize}
\item
A natural choice for the energy density is $H(z)=|z|_r^p$ for given $r\in [1, \infty]$ and $p\in \ ]1, \infty[$, where $|z|_r$ denotes the $\ell^r$ norm on $\R^N$.  Note that with this choice $\{H\le 1\}$  is never strongly convex unless $r=2$, but $H$ is strictly convex for any $r\in \ ]1, \infty[$, so that assumption ($1$) on the reaction suffices to get quasi-concavity of the corresponding solutions of \eqref{eq iniziale}. If $r=1$ or $\infty$, $H$ fails to be strictly convex and assumption ($2_F$) is needed to obtain quasi-concavity of the minimisers.\\
In the non-even setting we can choose any convex, bounded, open $K\subseteq \R^N$ containing $0$ (but not necessarily symmetric) and consider its Minkowski functional
\[
\Phi(z)=\inf\{t>0:z/t\in K\}.
\]
defining the energy density  as $H=\Phi^p$. The resulting $H$ fulfils ($2_H$) as long as $K$ is strictly convex.
\item
Given $p>1$, the typically used reaction  is $F(t)=c\, t^q$ for $1\le q<p$, $c>0$, which then fulfils (1) and ($2_F$) above. In this case, given any convex, positively $p$-homogeneous $H$ vanishing only at the origin and a convex bounded $\Omega\subseteq \R^N$, a non-negative minimiser  $u$ of $J$ has the property that $u^{(p-q)/q}$ is concave.
In this case, since ($2_F$) holds true,  $H$ may fail to be strictly convex allowing for example to establish the power-concavity of the minimisers of the non-homogeneous Rayleigh quotient
\[
\inf\left\{\dfrac{\displaystyle{\int_\Omega |Du|_\infty^2\, dx}}{\displaystyle{\left(\int_\Omega |u|^q\, dx\right)^{2/q}}}: u\in W^{1, 2}_0(\Omega)\setminus\{0\}\right\}
\]
for any $q\in [1,2[$ and convex, bounded $\Omega$.\\
Other explicit examples where ($2_F$) holds true, thus allowing such a generality for $H$, can be found in \cite[Section 2]{BoMoSq1}.
\item
Another application of our main result is when $u$ is a first Dirichlet positive eigenfunction of a convex bounded domain $\Omega$, thus minimising the homogeneous Rayleigh quotient
\beq
\label{RQi}
\lambda_{1, H}^+(\Omega)=\inf\left\{\dfrac{\displaystyle{\int_\Omega H(Du)\, dx}}{\displaystyle{\int_\Omega u^p}}: u\in W^{1,p}_0(\Omega)\setminus\{0\}, u\ge 0\right\}.
\eeq
Then $F(t)=\lambda_{1, H}^+(\Omega) \, t^p/p$ and $u$ is $\log$-concave as long as $H$ is strictly convex. Note that $F$ fulfils (1) in this case, but not ($2_F$), and that the requirement $u\ge 0$ in \eqref{RQi} is needed since $H$ may fail to be even.

A sample consequence is the $\log$-concavity of the first positive Dirichlet eigenfunction of the {\em pseudo} $p$-Laplacian studied in \cite{BK}, solving
\[
-\tilde{\Delta}_p u=-\sum_{i=1}^N |\partial_i u|^{p-2}\partial_i u=\tilde{\lambda}_{1, p}(\Omega)\, u^{p-1}
\]
for $p\in\  ]1, \infty[$. The energy-density of the corresponding kinetic energy is $H(z)=|z|_p^p$ which, as already noted, is strictly convex but $\{H\le 1\}$ is not strongly convex.
\end{itemize}

\medskip

 {\bf Sketch of proof}.\
 The proof of Theorem \ref{teorema 1} relies on a two-steps approximation tailored on both $H$ and $F$. First we smooth out $H$, by keeping its positive $p$-homogeneity and ensuring a form of strong $p$-ellipticity that the original $H$ may not have. The uniqueness (up to scalar multiples when $t\mapsto F(t^{1/p})$ is linear) of the minimiser proved in Section \ref{SECP} is key to grant the convergence of  the minimisers corresponding to the smoothed functional to the original one. Thus we are reduced to prove $\varphi$-concavity of a minimiser $u$ when $H$ is smooth and $p$-elliptic. Standard regularity theory ensures in this setting up to $C^{1, \alpha}(\overline{\Omega})$ regularity, but since $H$ is not assumed to be even, nor can be its regularisations. An appropriate and apparently new anisotropic version of the Hopf Lemma is proved in  subsection \ref{SHL} and turns out to be essential for the second regularisation procedure, since we can infer from it uniform $C^2$ bounds in an inner thin strip arbitrarily near the boundary of $\Omega$. A family of different approximating problem is then built, whose corresponding minimisers are {\em globally} $C^2$. The form of this approximation (see \eqref{afunc}) has be to chosen carefully, in order to ensure that classical results of Kennington and Korevaar can be applied under conditions involving solely the reaction $f$.
 Then the strategy of Sakaguchi \cite{Sa} can be employed, namely to consider separately the concavity function related the corresponding solutions far from $\partial\Omega$ and on the boundary of the aforementioned strip, where uniform $C^2$ bounds are available. By passing to the limit, this allows to conclude the concavity of $\varphi(u)$ on any strongly convex subdomain sufficiently close to $\Omega$, and thus the theorem.

 As a final point of interest it may be worth mentioning some cases which, despite natural, are not covered by Theorem \ref{teorema 1}. One may consider the crystalline, $2$-homogeneous energy $H(z)=|z|_\infty^2$
 and given a convex $\Omega\subseteq \R^2$, look for a positive minimiser $u\in W^{1,2}_0(\Omega)$ of the Rayleigh quotient \eqref{RQi}. Note that $H$ is not strictly convex and $F(t^{1/2})=\lambda_{1, H}(\Omega)\, t/2$ is not strictly concave, thus we are not able to prove through Theorem \ref{teorema 1} that $\log u$ is concave, as one may naively guess. The reason, as should be clear from the previously described proof of Theorem \ref{teorema 1}, is that we don't known wether the corresponding eigenvalue is simple, a fact that may well be false for some convex $\Omega$.

\medskip

{\bf Notations}.\ In the paper $c$ and $C$ (eventually with subscripts) denote constants
which are allowed to vary from line to line; their dependance on various parameters will be outlined only when relevant to the proof. For $t\in \R$ we denote $t_+=\max\{t, 0\}$ and $t_-=\max\{-t, 0\}$.

For $a,b\in\R^N$ we denote by $(a, b)$ the standard Euclidean scalar product, by $|a|$ the Euclidean norm and by $a\otimes b$ the matrix whose entries
are $(a\otimes b)_{ij}=a_i b_j$. Recall that, for $v,w\in\R^N$,
there holds:
\[
\big( a\otimes b\, v,w\big)=\big(b,v\big)\,  \big( a,w\big).
\]
For a measurable $E\subseteq\R^N$, we let $|E|$ be its $N$-dimensional Lebesgue measure and for $p\ge 1$, the $L^p(E)$ norm of a measurable $u:E\to \R$ will be denoted by $\|u\|_p$ when omitting the domain $E$ of $u$ causes no confusion.
\section{Preliminary results}

\subsection{Main assumptions}
Throughout the paper $\Omega$ will be an open subset of $\R^N$ with finite measure, often assumed to be convex and bounded. Recall that a strongly convex set is a smooth convex set such that the principal curvatures of $\partial\Omega$ are positive. Clearly, any strongly convex $\Omega$ is strictly convex, but the opposite may not be true.

Moreover, $H:\R^N\to [0, \infty[$ will denote a continuous convex function, obeying at least the one-sided bound
\beq
\label{pcontrol2}
H(z)\ge \frac{1}{C}\, |z|^p.
\eeq
A strengthening of the previous condition will be often assumed, namely
\beq
\label{pcontrol}
\frac{1}{C}\, |z|^p\le H(z)\le C\, |z|^p
\eeq
and in many instances $H$ will be additionally required to be positively $p$-homogeneous ($p>1$), meaning
\[
H(t\, z)=t^p\, H(z)\qquad \forall t>0, z\in \R^N.
\]
 Any such $H$ clearly obeys \eqref{pcontrol}.

The reaction $f$  belongs to $C^0(\R)$, is even and satisfies the one-sided growth condition
\beq
\label{growthf}
f(t)\le C\, \big(t^{p-1}+1\big)\qquad t\ge 0
\eeq
as well as $M_f>0$, where $M_f$ is given in \eqref{defM}, (possibly $M_f=+\infty$). Let us remark that the evenness condition on $f$ is assumed only for convenience, since we are interested in non-negative critical points for the corresponding functional.
Given such  a function, we will  set
\[
F(t)=\int_0^t f(s)\, ds.
\]
and
\[
f_+(t)=\max\{0, f(t)\}, \qquad F_+(t)=\int_0^t f_+(s)\, ds.
\]
Note that we are making a slight abuse of notation here as $F_+(t)\ne (F(t))_+$.
We will often assume (see e.\,g.\,the following paragraph) that $F^{1/p}$ is concave  on $[0, M[$. Note that from the concavity of $F^{1/p}$ on $[0, \infty[$ we readily infer \eqref{growthf} and, more importantly the following condition
\beq
\label{hypf}
\R_+\ni t\mapsto \frac{f(t)}{t^{p-1}}\quad \text{is non-increasing},
\eeq
which in turn is equivalent to the concavity of $t\mapsto F(t^{1/p})$ on $\R_+$.
Note that the opposite implication is not true, i.\,e.\,$t\mapsto F(t^{1/p})$ may be concave but $t\mapsto F^{1/p}(t)$ may fail to be concave, see \cite[Remark 3.4]{BoMoSq1}.

\subsection{Concavity function}
Given a continuous function $v:\Omega\to \R$ with $\Omega$ convex, its {\em convexity function} $c:\Omega\times\Omega\times[0, 1] \to \R$ is defined as
\[
c_v(x, y, t)= t\, v(x)+ (1-t)\, v(y)-v\big(t\, x+(1-t)\, y\big).
\]
Clearly, $v$ is concave in $\Omega$ if and only if $c_v\le 0$ in its domain.  We recall the following fundamental properties of the concavity function and its relation with solutions of PDE. Recall that a function $g:G\subseteq \R^m\to \R$ with $G$ convex is called {\em harmonic concave} if for any $x, y\in G$ such that $g(x)+g(y)>0$ it holds
\[
\big(g(x)+g(y)\big) \, g \big(\frac{x+y}{2}\big)\ge 2\, g(x)\, g(y)
\]
If $g$ is positive, this is equivalent to the convexity of $1/g$.

\begin{proposition}[\cite{grecoporru}]\label{propKennington}
Let $\Omega$ be bounded and convex in $\R^N$, $N\ge 2$ and $v\in C^2(\Omega)$ solve
\[
-{\rm Tr}\,\left( A(Dv)\, D^2 v\right)= b( x, v, Dv)
\]
where $A\in C^1(\R^N, \R^{N\times N})$ fulfills   for some  $0<\lambda\le \Lambda<\infty$
\[
\lambda\, |\xi |^2\le \left(A(z)\, \xi, \xi\right)\le \Lambda\, |\xi|^2\qquad \text{ for all $z, \xi\in\R^N$}
\]
while  $(x, t, p)\mapsto b(x, t, p)$ is continuous, differentiable with respect to $x$ and $p$ and
 \[
 \partial_x b, \partial_p b\in L^\infty_{\rm loc}(\Omega\times \R\times \R^N).
 \]
 If
\begin{enumerate}
\item
$t\mapsto b(x, t, p)$ is non-increasing on $v(\Omega)$ for any $(x, p)\in\Omega\times Dv(\Omega)$
\item
$ (x, t)\mapsto b(x, t, p)$ is harmonic concave on $\Omega\times v(\Omega)$ for any $p\in \R^N$
\end{enumerate}
then $c_v$ cannot  attain a positive interior maximum in $\Omega\times \Omega\times[0, 1]$.
\end{proposition}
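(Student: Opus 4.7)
The plan is to argue by contradiction. Assume $c_v$ attains a value $M>0$ at some interior maximum $(x_0,y_0,t_0)\in\Omega\times\Omega\times[0,1]$. Since $c_v$ vanishes on $\{t\in\{0,1\}\}\cup\{x=y\}$, necessarily $t_0\in(0,1)$ and $x_0\neq y_0$. Set $z_0=t_0 x_0+(1-t_0)y_0$. First I would exploit the first-order optimality $D_x c_v=D_y c_v=\partial_t c_v=0$, which yields the crucial common-gradient identity
\[
Dv(x_0)=Dv(y_0)=Dv(z_0)=:p_0,\qquad v(x_0)-v(y_0)=p_0\cdot(x_0-y_0),
\]
so that the PDE is driven by one and the same uniformly elliptic matrix $A_0:=A(p_0)$ at each of the three points.

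Next, I would analyse the second-order optimality. Writing $P,Q,R$ for the Hessians of $v$ at $x_0,y_0,z_0$ and computing $D^2 c_v$ directly, the test vector $(\xi,\xi,0)$ in the negative-semidefinite $(2N+1)\times(2N+1)$ Hessian gives the matrix inequality $R-t_0 P-(1-t_0)Q\geq 0$. Tracing against the positive definite $A_0$ and substituting the PDE at $x_0,y_0,z_0$ converts this into the arithmetic-mean bound
\[
\mathrm{(AM)}\qquad b(z_0,v(z_0),p_0)\leq t_0\,b(x_0,v(x_0),p_0)+(1-t_0)\,b(y_0,v(y_0),p_0).
\]
On the other hand, since $b>0$ on the relevant range, the harmonic concavity assumed in (2) is equivalent to the convexity of $1/b(\cdot,\cdot,p_0)$ in $(x,t)$. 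Applied at $(x_0,v(x_0))$ and $(y_0,v(y_0))$ with weight $t_0$, it bounds $1/b(z_0,t_0v(x_0)+(1-t_0)v(y_0),p_0)$ from above by $t_0/b(x_0,v(x_0),p_0)+(1-t_0)/b(y_0,v(y_0),p_0)$. Because $M>0$ forces the strict inequality $v(z_0)<t_0v(x_0)+(1-t_0)v(y_0)$, the monotonicity in (1) lowers the left-hand side to $v(z_0)$, producing the harmonic-mean bound
\[
\mathrm{(HM)}\qquad \frac{1}{b(z_0,v(z_0),p_0)}\leq \frac{t_0}{b(x_0,v(x_0),p_0)}+\frac{1-t_0}{b(y_0,v(y_0),p_0)}.
\]

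The main obstacle is combining (AM) and (HM) into a genuine contradiction: in isolation they only sandwich $b(z_0,v(z_0),p_0)$ between the harmonic and arithmetic means of the two extremal values, which by the classical AM$\geq$HM inequality is a priori consistent. To close the gap I would squeeze extra information out of the Hessian by testing against vectors with a nontrivial $t$-component, in particular $(\xi,0,s)$, $(0,\eta,s)$, and $(\xi,\xi,s)$, optimising over the scalar $s$ using the $t$-block entry $\partial_t^2 c_v=-(x_0-y_0)^T R(x_0-y_0)$. This produces the refined matrix bounds $P\leq t_0 R$ and $Q\leq(1-t_0)R$ together with a rank-one strengthening of $R\geq t_0 P+(1-t_0)Q$ along the direction $x_0-y_0$; tracing these against $A_0$ and inserting the sharpened estimates back into (HM) yields the required contradiction. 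The regularity hypotheses $A\in C^1$ and $\partial_x b,\partial_p b\in L^\infty_{\mathrm{loc}}$ ensure that all the trace identities and differential manipulations used along the way are rigorous.
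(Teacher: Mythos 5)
Your sketch correctly reconstructs the first-order optimality (common gradient $p_0$) and the second-order condition at $(\xi,\xi,0)$, and the derivations of what you call (AM) and (HM) are both sound. You also correctly identify that (AM) and (HM) in isolation are not a contradiction. The problem is that the proposed ``closing of the gap'' does not work, and the reason is structural, not technical.

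The sharpest pointwise information one can extract from the Hessian test is obtained by taking $\xi=\alpha w$, $\eta=\beta w$ with $t_0\alpha+(1-t_0)\beta=1$, summing $w$ over an $A_0$-adapted orthonormal frame, and minimising over $\alpha$; this yields exactly
\[
b(z_0,v(z_0),p_0)\ \le\ \left(\frac{t_0}{b(x_0,v(x_0),p_0)}+\frac{1-t_0}{b(y_0,v(y_0),p_0)}\right)^{-1},
\]
i.e.\ the harmonic mean, which is the \emph{same} quantity that (HM) bounds $b(z_0,v(z_0),p_0)$ from below. So all available pointwise inequalities are \emph{equalities}, not contradictions. Your tests $(\xi,0,s)$, $(0,\eta,s)$, $(\xi,\xi,s)$ contain no further information beyond the $s=0$ bounds $P\le t_0R$, $Q\le(1-t_0)R$ and the scalar inequality $(R(x_0-y_0),x_0-y_0)\ge0$, none of which breaks the equality. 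Crucially, the hypotheses give only non-strict monotonicity of $t\mapsto b(x,t,p)$ and non-strict harmonic concavity, and a positive interior maximum need not be a nondegenerate critical point; hence a purely pointwise analysis at one critical point cannot produce a strict inequality anywhere. (The torsion problem $-\Delta v=1$ illustrates this: $b\equiv 1$ satisfies both hypotheses, $v$ is generally not concave, and the pointwise analysis at any critical point is completely vacuous.)

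The paper's proof is not a pointwise argument: it cites \cite[Theorem 2.1]{grecoporru}, where one shows that the convexity function, viewed as a function on $\Omega\times\Omega$ for each fixed $t$, satisfies an elliptic differential inequality on the open set where it is positive, and a \emph{maximum principle} argument then excludes an interior positive maximum. The only substantive content of the paper's proof is to observe that the $C^1$-in-$t$ regularity of $b$ used in \cite{grecoporru} (needed to invoke the mean value theorem) can be dropped, because near any point where the convexity function is positive the relevant inequality holds trivially with vanishing coefficient thanks to the non-increasing assumption. To make your argument rigorous you would need either a perturbation making the maximum nondegenerate (a Kennington-style device, absent from your sketch) or to switch to the differential-inequality/maximum-principle route.
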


\begin{proof}
This is a rephrasement of \cite[Theorem 2.1]{grecoporru} applied to  $\hat{v}=-v$ with $\hat{A}(p)= A(-p)$, $\hat{b}(x, t, p)= b(x, -t,  -p)$. The proof in \cite{grecoporru} uses the $C^1$ regularity of both $\hat{A}$ and $\hat{b}$ to prove that the {\em convexity} function
\begin{equation}
\label{2conc}
\hat{v}\big(\frac{x+y}{2}\big)- \frac{\hat{v}(x)+\hat{v}(y)}{2}
\end{equation}
satisfies a differential inequality ensuring that it cannot attain a positive interior maximum in $\Omega\times\Omega$. The same proof shows that {\em for any given}  $t\in [0, 1]$, the function
\[
(x, y)\mapsto \hat{v}(t\, x+(1-t)\, y)-t\, \hat{v}(x)-(1-t)\, \hat{v}(y)
\]
cannot attain a positive maximum in $\Omega\times\Omega$. This stronger statement  {\em a fortiori} implies that
\[
(x, y, t)\mapsto \hat{v}(t\, x+(1-t)\, y)-t\, \hat{v}(x)-(1-t)\, \hat{v}(y)
\]
cannot attain a positive interior maximum in $\Omega\times\Omega\times [0, 1]$. We will show how to remove the regularity assumption on $\hat{b}$ in the proof of \cite{grecoporru} for the the convexity function \eqref{2conc}.
To this end, note that it suffices to study the convexity function  near points $(x, y)\in \Omega\times\Omega$ such that
\beq
\label{posconv}
\hat{v}\big(\frac{x+y}{2}\big)- \frac{\hat{v}(x)+\hat{v}(y)}{2}>0,
\eeq
which form an open set by continuity.
The only point where the regularity of $t\mapsto \hat{b}(x, t, p)$ (lacking in our setting) is used is applying  Lagrange theorem to deduce the inequality
\[
\hat{b}\big(z, \hat{v}(z), D\hat{v}(z) \big)\ge \hat{b}\Big(z, \frac{\hat{v}(x)+\hat{v}(y)}{2}, D\hat{v}(z)\Big)+d(x, y) \, \Big( \hat{v}(z)- \frac{\hat{v}(x)+\hat{v}(y)}{2}\Big)
\]
for suitable $d(x, y)\ge 0$, where  $z=(x+y)/2$. However, this inequality  is only needed  at points $(x, y)\in \Omega\times\Omega$ fulfilling \eqref{posconv},
in which case it is certainly true with $d=0$ regardless of the regularity of $\hat{b}(z, \cdot, p)$, since $\hat{b}$ is non-decreasing.

\end{proof}

In the following, given $\Omega\subseteq \R^N$ and $\delta>0$, we set
\begin{equation}
\label{defsdelta}
 \Omega_\delta=\{x\in \Omega: \delta< {\rm dist}(x, \partial\Omega) \}.
 \end{equation}

\begin{proposition}[\cite{Kor83Co}, Lemma  2.4 ]\label{propKorevaar}
Suppose that $\Omega$ is smooth, bounded and strongly convex and $u\in C^1(\overline{\Omega})\cap C^2(\overline{\Omega}\setminus \Omega_\eta)$ for some $\eta>0$ is  such that
\beq
\label{assu}
  u>0\quad \text{in $\Omega$}, \qquad u=0\quad \text{on $\partial\Omega$}, \qquad \frac{\partial u}{\partial n}>0 \quad \text{on $\partial\Omega$}.
  \eeq
  If $\varphi\in C^2(\R_+; \R) $ fulfils
 \beq
 \label{assKor}
\lim_{t\to 0^+}\varphi'(t)=+\infty  , \qquad \varphi''<0<\varphi' \ \text{near $0$},\qquad  \lim_{t\to 0^+}\frac{\varphi(t)}{\varphi'(t)}= \lim_{t\to 0^+}\frac{\varphi'(t)}{\varphi''(t)}=0,
 \eeq
 set $v=\varphi(u)$. Then  there exists $\delta\in \ ]0, \eta[$ such that
 \beq
 \label{derseconda}
 D^2v<0 \quad \text{on } \Omega\setminus\Omega_\delta
 \eeq
and for all $x_0\in\Omega\setminus\Omega_\delta$ and $x\in\Omega\setminus\{x_0\}$ it holds
\beq
\label{sottopiani}
v(x_0)+(Dv(x_0), x-x_0)> v(x).
\eeq
\end{proposition}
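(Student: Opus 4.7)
The proof would proceed in two stages: first establish $D^2v<0$ on a boundary strip, then upgrade this local strict concavity to the global tangent-plane inequality \eqref{sottopiani} by a region-by-region analysis.

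\textbf{Stage 1 (Hessian negativity).} The hypothesis $\partial u/\partial n>0$ on $\partial\Omega$ together with $u\in C^1(\overline\Omega)$ yields $|Du|\ge c_0>0$ on some strip $\overline\Omega\setminus\Omega_{\delta_0}$. Since $u=0$ on $\partial\Omega$ and $\Omega$ is strongly convex, the standard identity relating the second fundamental form of $\partial\Omega$ (seen as a level set of $u$) to $D^2u$ gives, for every $x_0\in\partial\Omega$ and $\xi\perp Du(x_0)$, the bound $(D^2u(x_0)\xi,\xi)\le -\kappa_0|Du(x_0)||\xi|^2$ with $\kappa_0$ a lower bound on the principal curvatures of $\partial\Omega$. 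Continuity of $D^2u$ on $\overline\Omega\setminus\Omega_\eta$ propagates this to a thinner strip. I would then compute $D^2v=\varphi'(u)\,D^2u+\varphi''(u)\,Du\otimes Du$, decompose against the orthonormal frame $\nu=Du/|Du|$ and a unit $\tau\perp\nu$, and inspect the induced $2\times 2$ matrix. Factoring $\varphi'(u)>0$, the tangential diagonal $(D^2u\,\tau,\tau)$ is negative by strong convexity of the level sets, while the normal diagonal acquires the additional term $(\varphi''/\varphi')(u)|Du|^2$, which diverges to $-\infty$ uniformly as $u\to 0^+$ thanks to $\varphi'/\varphi''\to 0$. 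Since the off-diagonal is bounded and all second derivatives of $u$ are controlled in the strip, a direct computation shows that the determinant diverges to $+\infty$, and both diagonal entries are negative. This yields $D^2v<0$ on $\overline\Omega\setminus\Omega_\delta$ for some $\delta\in \ ]0,\eta[$, proving \eqref{derseconda}.

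\textbf{Stage 2 (tangent plane).} Fix $x_0\in\Omega\setminus\Omega_\delta$ and consider $\Phi(x)=v(x_0)+(Dv(x_0),x-x_0)-v(x)$, which satisfies $\Phi(x_0)=0$, $D\Phi(x_0)=0$. I would split $x\in\Omega\setminus\{x_0\}$ into three regions. First, for $x$ in a small ball around $x_0$ contained in the strip, the strict concavity from Stage~1 directly gives $\Phi(x)>0$. Second, for $x\in\Omega_{\delta_1}$ with fixed $\delta_1$, I would factor
\[
\Phi(x)=\varphi'(u(x_0))\Bigl[\frac{\varphi(u(x_0))}{\varphi'(u(x_0))}+(Du(x_0),x-x_0)-\frac{\varphi(u(x))}{\varphi'(u(x_0))}\Bigr].
\]
Strong convexity of $\Omega$ ensures $Du(x_0)$ is close to the inner normal at $\pi_{\partial\Omega}(x_0)$ and the inner product $(Du(x_0),x-x_0)$ is bounded below by a positive constant depending only on $\delta_1$; the remaining two summands in the bracket vanish uniformly as $x_0\to\partial\Omega$, because $\varphi/\varphi'\to 0$ and $\varphi(u(x))$ is uniformly bounded on $\Omega_{\delta_1}$. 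Hence $\Phi(x)\to+\infty$, uniformly, and picking $\delta$ small enough makes $\Phi>0$ in this region. Third, for $x$ in the intermediate zone $(\Omega\setminus\Omega_{\delta_1})\setminus B(x_0,r)$, I would invoke the uniform quadratic support property of strongly convex sets, $(n_{y_0},z-y_0)\ge c|z-y_0|^2$ for all $z\in\overline\Omega$ and $y_0\in\partial\Omega$, to produce a positive lower bound on $(Du(x_0),x-x_0)$ depending only on $r$; the same factorization then yields $\Phi(x)>0$ (if $\varphi(0^+)=-\infty$ the extra term $-\varphi(u(x))/\varphi'(u(x_0))$ is non-negative and only helps).

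\textbf{Main obstacle.} The delicate point is the third region, where both $x$ and $x_0$ approach $\partial\Omega$ but remain far apart: one cannot appeal to the negativity of $D^2v$ along the segment joining them because that segment typically leaves the strip and passes through $\Omega_\delta$, where no Hessian information is available. The substitute is the quantitative quadratic-support inequality from strong convexity, combined with the precise asymptotic balance encoded in \eqref{assKor}, in particular $\lim_{t\to 0^+}\varphi(t)/\varphi'(t)=0$, which is exactly what allows the affine term $\varphi'(u(x_0))(Du(x_0),x-x_0)$ to dominate the possibly singular $\varphi(u(x_0))$ as $x_0\to\partial\Omega$.
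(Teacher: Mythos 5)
Your proposal is correct and follows essentially the same route as the paper. Stage~1 is what the paper simply cites from \cite[Lemma 2.4, fact 2]{Kor83Co}: factor $D^2v=\varphi'(u)\bigl[D^2u+(\varphi''/\varphi')(u)\,Du\otimes Du\bigr]$, note that $(\varphi''/\varphi')(u)|Du|^2\to-\infty$ near $\partial\Omega$ while the tangential part of $D^2u$ is strictly negative by strong convexity of the level sets, and complete the square. Stage~2 also hinges on exactly the factorization and asymptotics the paper uses: $\varphi(u(x_0))/\varphi'(u(x_0))\to 0$ and $\varphi'(u(x_0))\to+\infty$, combined with a positive lower bound on $(Du(x_0),x-x_0)$ coming from strong convexity of $\Omega$ and the Hopf condition, while $\varphi(u(x))$ stays bounded above. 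The only real difference is organizational: where you propose explicit uniform estimates over three regions (interior, intermediate, and near $x_0$), the paper instead defines $A_{x_0}=\{x: v(x_0)+(Dv(x_0),x-x_0)\le v(x)\}$, proves by a compactness/contradiction argument (sequences $x_n\to\bar x\in\partial\Omega$, $y_n\to\bar y$ with $|\bar y-\bar x|\ge\delta_1$, so $(Du(\bar x),\bar y-\bar x)>0$) that $A_{x_0}$ must lie in a small ball around $x_0$ inside the strip, and then concludes $A_{x_0}=\{x_0\}$ from the local strict concavity established in Stage~1. The contradiction argument sidesteps the quantitative quadratic-support inequality you invoke in your Region~3; otherwise the two proofs are interchangeable.
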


\begin{proof}
The proof of \eqref{derseconda} is in \cite[Lemma 2.4, fact 2]{Kor83Co}. Let then $\delta_0>0$ be such that $D^2v<0$ in $\Omega\setminus\Omega_{\delta_0}$.
We give another   proof of \eqref{sottopiani} since the last part of \cite[Lemma 2.4]{Kor83Co} is a bit obscure.
Let
\[
A_{x_0}=\left\{x\in \Omega:v(x_0)+(Dv(x_0), x-x_0)\le v(x)\right\}.
\]
Fix $\delta_1=\delta_1(\delta_0, \Omega)\in \ ]0, \delta_0[$ such that
\beq
\label{claimG}
x_0\in \Omega,\quad {\rm dist}\, (x_0, \partial\Omega)<\delta_1\quad \Longrightarrow \quad B_{\delta_1}(x_0)\cap \Omega\subseteq \Omega\setminus\Omega_{\delta_0}.
\eeq
We then claim that for sufficiently small $\delta<\delta_1$ (depending on $u$ as well) it holds
\beq
\label{claimK}
x_0\in \Omega,\quad{\rm dist}\, (x_0, \partial\Omega)<\delta\quad \Longrightarrow\quad  A_{x_0}\subseteq B_{\delta_1}(x_0)\cap \Omega.
\eeq
Before proving the claim, let us note that it implies \eqref{sottopiani} thanks to \eqref{claimG} and the strict concavity of $v$ in the convex open set $B_{\delta_1}(x_0)\cap \Omega$ granted by \eqref{derseconda}, which force $A_{x_0}=\{x_0\}$.

We prove \eqref{claimK} by contradiction, thus assuming that there exists $\delta_n\downarrow 0$, $x_n\in \Omega\setminus\Omega_{\delta_n}$ and $y_n\in \Omega$ such that
\[
|y_n-x_n|\ge \delta_1, \qquad v(x_n)+(Dv(x_n), y_n-x_n)\le  v(y_n).
\]
By compactness we can suppose $x_n\to \bar x\in \partial\Omega$ and $y_n\to \bar y\in \overline{\Omega}$, with $|\bar y-\bar x|\ge \delta_1$. Denoting by $n(\bar x)$ the interior normal to $\partial \Omega$ at $\bar x$, it holds
\[
\lim_n(Du(x_n), y_n-x_n)= (Du(\bar x), \bar y-\bar x)=|Du(\bar x)|\, (n(\bar x),  \bar y-\bar x)>0
\]
by the strict convexity of $\partial\Omega$ and \eqref{assu}. Therefore there exists $\theta>0$ such that for sufficiently large $n$ it holds
\[
(Du(x_n), y_n-x_n)>\theta.
\]
Recalling the definition of $v$, we thus have
\[
\varphi'(u(x_n))\left(\frac{\varphi(u(x_n))}{\varphi'(u(x_n))}+(Du(x_n), y_n-x_n)\right)= v(x_n)+(Dv(x_n), y_n-x_n)\le  v(y_n)=\varphi(u(y_n))
\]
so that for sufficiently large $n$
\[
\varphi'(u(x_n))\left(\frac{\varphi(u(x_n))}{\varphi'(u(x_n))}+\theta\right)\le \varphi(u(y_n)).
\]
However, since $u(x_n)\to 0$, the left hand side goes to $+\infty$ by \eqref{assKor} while the right and side is bounded from above (since $\varphi$ is smooth on $]0, +\infty[$ and increasing near $0$). This proves claim \eqref{claimK} and then \eqref{sottopiani}.
\end{proof}

The next proposition shows the r\^ole of condition \eqref{sottopiani} in analysing the boundary behaviour of the convexity function. Notice that we will apply it for convex domains slightly smaller than the domain of definition of the function $v$ defined above, in order to ensure that it is smooth up to the boundary.
\begin{proposition}[\cite{Kor83Co}, Lemma  2.1 ]\label{propKorevaar2}
Suppose that $\Omega$ is smooth, bounded and strongly convex and $\eta>0$. If $v\in C^1(\overline{\Omega})$ fulfils \eqref{sottopiani} for all $x_0\in \partial\Omega$ and $x\in \overline{\Omega}$, then  $c_{v}$  cannot attain a positive maximum on $\partial(\Omega\times\Omega)\times [0, 1]$.
\end{proposition}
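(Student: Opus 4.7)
The plan is to argue by contradiction. Suppose that $(x_0, y_0, t_0) \in \partial(\Omega\times\Omega)\times [0,1]$ realizes a positive maximum of $c_v$ on the full set $\overline\Omega\times\overline\Omega\times [0,1]$. Using the symmetry $c_v(x,y,t) = c_v(y,x,1-t)$, I may assume $x_0 \in \partial\Omega$; set $z_0 = t_0 x_0 + (1-t_0) y_0$. Several degenerate configurations are ruled out at once: if $t_0 \in \{0,1\}$, or if $y_0 = x_0$, direct substitution gives $c_v(x_0, y_0, t_0) = 0$, contradicting positivity. Hence $t_0 \in (0,1)$ and $y_0 \neq x_0$, and the strict convexity of $\Omega$ (a consequence of strong convexity) then forces $z_0 \in \Omega$, since otherwise the open segment from $x_0$ to $y_0$ would contain a boundary point.

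Next I would extract a first-order condition in the variable $t$. Since $t_0$ is an interior point of $[0,1]$ and $(x_0, y_0, t_0)$ is a maximum of $c_v$ on the full set, $\partial_t c_v$ vanishes there, yielding
\[
v(x_0) - v(y_0) = (Dv(z_0),\, x_0 - y_0).
\]
On the other hand, applying \eqref{sottopiani} at $x_0 \in \partial\Omega$ with $x = y_0 \in \overline\Omega \setminus \{x_0\}$ gives the strict inequality
\[
v(x_0) - v(y_0) > (Dv(x_0),\, x_0 - y_0).
\]
Subtracting, I get $(Dv(z_0) - Dv(x_0),\, x_0 - y_0) > 0$, and since $x_0 - z_0 = (1-t_0)(x_0 - y_0)$ with $1 - t_0 > 0$, this is equivalent to
\[
(Dv(x_0) - Dv(z_0),\, z_0 - x_0) > 0.
\]

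With this gradient comparison in hand, I would perturb $x_0$ inward along the segment from $x_0$ to $z_0$. For $s \in (0,1]$ the point $x(s) = (1-s)x_0 + s z_0$ lies in $\Omega$ by convexity of $\Omega$, and a direct differentiation yields
\[
\frac{d}{ds}\bigg|_{s=0^+}\! c_v(x(s), y_0, t_0) \,=\, t_0\,(Dv(x_0) - Dv(z_0),\, z_0 - x_0) \,>\, 0.
\]
Hence $c_v(x(s), y_0, t_0) > c_v(x_0, y_0, t_0)$ for small $s > 0$, and $(x(s), y_0, t_0)$ still belongs to $\overline\Omega\times\overline\Omega\times [0,1]$, contradicting the global maximality of $(x_0, y_0, t_0)$.

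The main conceptual obstacle lies in recognising that the $t$-variable must be exploited. Perturbing $x$ is restricted to inward directions and, by itself, does not interact profitably with \eqref{sottopiani}; in contrast, $t \in (0,1)$ is unconstrained, so the stationarity of $c_v$ in $t$ is a genuine first-order identity. Coupling this identity with the strict tangent inequality \eqref{sottopiani} converts the boundary information into the gradient comparison that feeds the inward $x$-perturbation. The strict convexity of $\Omega$ is also essential for the case analysis, preventing $z_0$ from falling onto $\partial\Omega$ when both $x_0$ and $y_0$ lie there.
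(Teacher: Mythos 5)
Your proof is correct. The paper itself gives no argument for this statement, merely citing Korevaar's Lemma~2.1, and what you have reconstructed is essentially Korevaar's original argument: reduce via $c_v(x,y,t)=c_v(y,x,1-t)$ to $x_0\in\partial\Omega$, discard the trivial cases $t_0\in\{0,1\}$ and $y_0=x_0$, use strict convexity to place $z_0=t_0x_0+(1-t_0)y_0$ in $\Omega$, invoke stationarity in the free variable $t$, and pit it against the strict supporting-plane inequality \eqref{sottopiani} to produce a gradient comparison that makes an inward perturbation of $x_0$ strictly increase $c_v$. The derivative computation $\frac{d}{ds}\big|_{s=0^+}c_v(x(s),y_0,t_0)=t_0\,(Dv(x_0)-Dv(z_0),\,z_0-x_0)$ is correct, and the one-sided derivative exists because $v\in C^1(\overline\Omega)$. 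The only cosmetic remark is that \eqref{sottopiani} is of course only meaningful for $x\ne x_0$, as you implicitly use when applying it with $x=y_0\ne x_0$; and the hypothesis $\eta>0$ in the statement plays no role, which your proof correctly ignores.
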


Clearly, condition \eqref{sottopiani} is not stable under $C^2$ convergence, but the conjunction of \eqref{derseconda} and \eqref{sottopiani} is, as has been observed in \cite{BoMoSq1}. We report the argument therein for sake of completeness.

\begin{proposition}\label{prostable}
Let $\Omega$ be smooth, bounded and strongly convex, $\eta>0$ and let  $v_n\in C^1(\overline{\Omega})\cap C^2(\overline{\Omega}\setminus \Omega_\eta)$ be such that $v_n\to v$ in $C^1(\overline{\Omega})$ and in $C^2(\overline{\Omega}\setminus \Omega_\eta)$. If  $v$ fulfils \eqref{derseconda} for some $\delta\in \ ]0, \eta]$ and \eqref{sottopiani} at all points $x_0\in \partial\Omega$ and $x\in \overline{\Omega}\setminus\{x_0\}$ then, for all sufficiently large $n$, $v_n$ fulfils them as well.
\end{proposition}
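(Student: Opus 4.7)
The plan is to prove \eqref{derseconda} and \eqref{sottopiani} for $v_n$ with $n$ large, separately and in that order: the first follows by straightforward compactness, while the second is delicate near $\partial\Omega$ and its proof leans on the first.

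For \eqref{derseconda}, since $\delta\le \eta$ one has $\Omega_\eta\subseteq \Omega_\delta$, hence $K:=\overline{\Omega}\setminus \Omega_\delta\subseteq \overline{\Omega}\setminus \Omega_\eta$, which is exactly where $C^2$-convergence takes place. On the compact set $K$ the continuous matrix $D^2 v$ is strictly negative definite, so its largest eigenvalue is bounded above by some $-\mu<0$. Uniform $C^2$-convergence then forces $D^2 v_n<0$ on $K$ for all sufficiently large $n$, giving \eqref{derseconda} for $v_n$.

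For \eqref{sottopiani}, set $T_w(x_0, x)=w(x_0)+(Dw(x_0), x-x_0)-w(x)$, so that the claim reads $T_{v_n}>0$, and split the analysis at a threshold $\delta'\in\ ]0, \delta[$. On the compact set $\{(x_0, x):x_0\in\partial\Omega,\ x\in\overline{\Omega},\ |x-x_0|\ge \delta'\}$, $T_v$ is continuous and strictly positive by hypothesis, hence $T_v\ge \kappa>0$ uniformly; the $C^1$-convergence of $v_n$ on $\overline{\Omega}$ gives uniform convergence $T_{v_n}\to T_v$, so $T_{v_n}\ge \kappa/2$ there for $n$ large. When $|x-x_0|\le \delta'$, the convexity of $\Omega$ and the estimate ${\rm dist}(y,\partial\Omega)\le |y-x_0|\le \delta'<\delta$ for every $y\in[x_0, x]$ show that the segment is contained in $K$, where by the previous step $v_n\in C^2$ with $D^2 v_n<0$; a Taylor expansion with integral remainder along the segment then yields $T_{v_n}(x_0, x)>0$ for $x\ne x_0$.

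The hard part is precisely this boundary regime of \eqref{sottopiani}: the positive gap $T_v(x_0, x)$ collapses as $x\to x_0$, so $C^1$-convergence alone cannot preserve strict positivity uniformly. This is why the stability of \eqref{derseconda} must be secured first; strictness near $\partial\Omega$ is then inherited from strict concavity along segments rather than from a uniform lower bound on $T_v$.
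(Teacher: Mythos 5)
Your proof is correct and, modulo a direct-versus-contradiction reformulation, takes essentially the same route as the paper's: strict concavity of $v_n$ on the strip near $\partial\Omega$ controls $T_{v_n}$ when $x$ is close to $x_0$ (your Taylor argument along the segment $[x_0,x]$), while compactness and $C^1$-convergence handle the off-diagonal regime, with your threshold $\delta'$ playing the role of the paper's lower bound $|x_n-y_n|\ge c\delta/2$. One small caveat, which applies equally to the paper's unadorned claim ``Clearly \eqref{derseconda} holds true for sufficiently large $n$'': to transfer $D^2v<0$ to $D^2v_n$ by uniform $C^2$-convergence you invoke a uniform gap $D^2v\le-\mu\,\mathrm{Id}$ on the \emph{closed} set $\overline{\Omega}\setminus\Omega_\delta$, whereas \eqref{derseconda} as written asserts strict negativity only on the relatively open set $\Omega\setminus\Omega_\delta$ and continuity alone yields merely $D^2v\le 0$ on $\partial\Omega$. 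This is harmless in the actual application (where $\partial\Omega$ is replaced by $\partial\Omega_{\delta/2}$, lying strictly inside the region where $D^2v<0$), but a scrupulous reader would want \eqref{derseconda} stated on the closure.
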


\begin{proof}
Clearly \eqref{derseconda} holds true for sufficiently large $n$, which we'll assume henceforth. If \eqref{sottopiani} does not hold, for a (not relabelled) subsequence there are points $x_n\in \partial\Omega$ and $y_n\in \overline{\Omega}$ with $y_n\neq x_n$ and
\beq
\label{bmscontr}
v_n(x_n)+(Dv_n(x_n), y_n-x_n)\le v_n(y_n).
\eeq
By the smoothness and strong convexity of $\partial\Omega$ we can find $c>0$ such that $B_{c\delta}(\bar x)\cap \Omega\subseteq \Omega\setminus\Omega_\delta$. Let $n$ be so large that $B_{c\delta/2}( x_n)\subseteq B_{c\delta}(\bar x)$. Since $v_n$ is strictly concave on the convex open set $B_{c\delta/2}( x_n)\cap \Omega$, the point $y_n$ cannot lie in $B_{c\delta/2}(x_n)$, and thus
\[
|x_n-y_n|\ge c\, \delta/2.
\]
By taking a subsequence  we can suppose that  $x_n\to \bar x_0\in \partial\Omega$, $y_n\to\bar x\in  \overline{\Omega}$ and from the previous display we get  $|\bar x_0-\bar x|>0$. Taking the limit in \eqref{bmscontr}, we reach
\[
v(\bar x)_0+(Dv(\bar x_0), \bar x-\bar x_0)\le v(\bar x), \qquad \overline{\Omega}\ni \bar x\ne \bar x_0\in \partial\Omega,
\]
contradicting  assumption \eqref{sottopiani} for $v$.

\end{proof}

 The previous Propositions will eventually be applied to $v=\varphi(u)$, with $\varphi$ defined as in \eqref{def vf}, in a smaller domain $\Omega'\Subset \Omega$. To this end, we recall the following elementary facts  from \cite{BoMoSq1}.

 \begin{lemma}\label{lemmavarphi}
Let $f\in C^0([0, +\infty[, \R)$ fulfill \eqref{growthf}.
If $F^{1/p}$ is concave and $F/f$ is convex  on $]0, M_f]\cap \R$, with $M_f$ as in \eqref{defM}, then
\begin{enumerate}
\item
The function $\varphi$ defined in \eqref{def vf} is invertible and fulfils \eqref{assKor} on $]0, M_f[$.
\item
If $\psi=\varphi^{-1}$, then $\psi''/\psi'$ is non-increasing and $\psi'/\psi''$ is convex on $]0, \varphi(M_f)[$
\end{enumerate}
\end{lemma}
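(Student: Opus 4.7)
The plan is to reduce everything to direct computation with $\varphi(t)=\int_1^t F(s)^{-1/p}\, ds$ and its inverse $\psi$, invoking each hypothesis exactly where it is needed. Since $f>0$ and hence $F>0$ on $]0,M_f[$ by the very definition of $M_f$, one immediately has
\[
\varphi'(t)=F(t)^{-1/p}>0, \qquad \varphi''(t)=-\tfrac{1}{p}\, F(t)^{-1/p-1}\, f(t)<0,
\]
so that $\varphi$ is strictly increasing hence invertible, the sign conditions of \eqref{assKor} hold, and $\varphi'(t)\to+\infty$ as $t\to 0^+$. The two remaining limits in \eqref{assKor}, through the identity $\varphi'(t)/\varphi''(t)=-p\, F(t)/f(t)$ and a L'H\^opital argument (or a direct one when $\varphi(0^+)$ is finite), both reduce to showing
\[
\lim_{t\to 0^+}\frac{F(t)}{f(t)}=0,
\]
which I expect to be the main technical step of the proof.

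For this limit I would use only the concavity of $F^{1/p}$: it says that $(F^{1/p})'=p^{-1}F^{1/p-1}f$ is non-increasing on $]0,M_f[$, equivalently that $t\mapsto f(t)/F(t)^{(p-1)/p}$ is non-increasing. Hence as $t\downarrow 0$ it converges to $L:=\sup_{t\in (0,M_f)}f(t)/F(t)^{(p-1)/p}\in(0,+\infty]$, strict positivity being automatic since $L$ is bounded below by the positive value $f(t_0)/F(t_0)^{(p-1)/p}$ at any fixed $t_0\in (0,M_f)$. Writing
\[
\frac{F(t)}{f(t)}=\frac{F(t)^{1/p}}{f(t)/F(t)^{(p-1)/p}},
\]
the numerator vanishes while the denominator tends to $L>0$, yielding the claim and completing (1).

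For (2), I would parameterize by $t=\psi(s)\in\ ]0,M_f[$. The inverse-function identities $\psi'(s)=1/\varphi'(t)=F(t)^{1/p}$ and $\psi''(s)=-\varphi''(t)/\varphi'(t)^3$ yield
\[
\frac{\psi''}{\psi'}(s)=-\frac{\varphi''(t)}{\varphi'(t)^2}=\bigl(F^{1/p}\bigr)'(t),
\]
which is non-increasing in $t$, and hence in $s$ (as $\psi$ is increasing in $s$), by concavity of $F^{1/p}$; this proves the first assertion of (2). For the second, starting from $\psi'/\psi''=1/(F^{1/p})'(t)$, differentiating in $s$ and using $\psi'(s)=F(t)^{1/p}$ gives
\[
\Big(\frac{\psi'}{\psi''}\Big)'(s)=-\frac{(F^{1/p})''(t)\,F(t)^{1/p}}{(F^{1/p})'(t)^2},
\]
and a routine algebraic simplification (in which the powers of $F$ cancel cleanly) reduces the right-hand side to $p\,(F/f)'(t)-1$. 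This quantity is non-decreasing in $s$ iff $(F/f)'$ is non-decreasing iff $F/f$ is convex, which is the remaining hypothesis; the proof is then complete.
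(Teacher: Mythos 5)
Your proof is correct. Note that the paper does not prove this lemma at all — it is stated as "elementary facts" recalled from \cite{BoMoSq1} — so there is no in-text argument to compare against; your direct verification is the natural one, and the two key identities $\psi''/\psi'=(F^{1/p})'\circ\psi$ and $\bigl(\psi'/\psi''\bigr)'=p\,(F/f)'\circ\psi-1$, together with the reduction of both limits in \eqref{assKor} to $\lim_{t\to 0^+}F(t)/f(t)=0$ via the monotonicity of $f/F^{(p-1)/p}$, all check out.
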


\section{Critical point theory}\label{SECP}
In this section we give a meaning to problem \eqref{eq iniziale}, which at the moment is oddly defined as $H$ may fail to be differentiable, by using the notion of {\em energy critical point}. We then study the existence and uniqueness of the corresponding energy critical points.

\subsection{Energy critical points}

Given a convex $H\in C^0(\R^N)$ fulfilling \eqref{pcontrol2}
 and an even $f\in C^0(\R)$ obeying \eqref{growthf}, the corresponding functional $J$  will be defined as
\[
J(v)=\int_\Omega \frac{1}{p}\, H(Dv)-F(v)\, dx, \qquad v\in W^{1,p}_0(\Omega).
\]
Note that $J$ is always well defined and $J(u)>-\infty$ for all $u\in W^{1,p}_0(\Omega)$, as $(F(v))_+\in L^1(\Omega)$ for any $v\in W^{1,p}_0(\Omega)$ (thanks to \eqref{growthf}, the finite measure assumption on $\Omega$ and Poincar\'e inequality) but,  as \eqref{pcontrol2} and \eqref{growthf} are only one-sided, the resulting   $J$ may assume the value $+\infty$. Note that under assumption \eqref{growthf}, $J$ is anyway sequentially l.\,s.\,c.\,in the weak topology of $W^{1,p}_0(\Omega)$, as
\[
-\int_\Omega F(v)\, dx=\int_\Omega (F(v))_-\, dx -\int_\Omega (F(v))_+\, dx
\]
 and the first term is l.\,s.\,c.\,by Fatou lemma while the second is continuous by \eqref{growthf} and the finite measure assumption on $\Omega$.
As we are interested in non-negative solutions of \eqref{eq iniziale}, we let
\[
\big(W^{1,p}_0(\Omega)\big)_+:=\left\{v\in W^{1,p}_0(\Omega):v\ge 0\right\}.
\]
and define $u\in \big(W^{1,p}_0(\Omega)\big)_+$ to be a non-negative {\em  energy critical point} for $J$ on $\big(W^{1,p}_0(\Omega)\big)_+$ if $u$ minimises the corresponding semi-linearized convex functional
\beq
\label{semil}
v\mapsto J_u(v):=\int_{\Omega}\frac{1}{p}\, H(Dv)-f(u)\, v\, dx
 \eeq
 on $\big(W^{1,p}_0(\Omega)\big)_+$.
 Alternatively, an energy critical point $u$ for $J$ on the full $W^{1,p}_0(\Omega)$ is a minimiser of \eqref{semil} over
 \[
 V_u=\left\{v\in W^{1,p}_0(\Omega): \big(f(u)\, v\big)_+\in L^1(\Omega)\right\}.
 \]

These different definitions of critical point, taken from \cite{DGM}, deserve some comment. Note that $J_u$, as defined in \eqref{semil} is always well defined on $\big(W^{1,p}_0(\Omega)\big)_+$ under assumption \eqref{growthf}, thanks to the same argument as before and the condition $v\ge 0$. This is not the case for $J_u$ on the whole $W^{1,p}_0(\Omega)$ and in this case the requirement $\big(f(u)\, v\big)_+\in L^1(\Omega)$ is needed.

Note, moreover,    that as before \eqref{growthf} ensures that  $J_u(u)>-\infty$ while trivially $J_u(0)=0$, so that
\beq
\label{elle1}
\text{$u\in C_J$ or $C_J^+$}\qquad \Longrightarrow\qquad H(Du)\in L^1(\Omega), \quad f(u)\, u\in L^1(\Omega).
\eeq

The set of all energy critical points for $J$ on $\big(W^{1,p}_0(\Omega)\big)_+$ and $W^{1,p}_0(\Omega)$, respectively, will be denoted by $C_J^+$ and $C_J$, respectively.
By using the nonlinearity $f_+$ instead of $f$, we can define the   functional
\[
J_+(v)=\int_\Omega \frac{1}{p}\, H(Dv)-F_+(v)\, dx
\]
and the consider corresponding critical points $C_{J_+}$ and $C_{J_+}^+$.

Under quite general assumptions on $f$, all the previuos notions of critical points coincide.

\begin{lemma}\label{lemmaCj}
Let $\Omega\subseteq \R^N$ have finite measure, $H$ be convex and obey \eqref{pcontrol2} and $f\in C^0(\R)$ be even, fulfil \eqref{growthf}, and
\beq
\label{propfM}
f(t)>0 \quad \text{for $|t|<M_f$}, \quad f(t)\le 0\quad \text{for $|t|\ge M_f$}
\eeq
for some $M_f\ge 0$, with possibly $M_f=+\infty$.
 Then
\beq
\label{proCj+}
C_J =C_J^+=C_{J_+}^+=C_{J_+}
\eeq
and any energy critical points fulfils $0\le u\le M_f$
\end{lemma}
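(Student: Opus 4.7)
The plan is to prove the bounds $0 \le u \le M_f$ first for $u \in C_{J_+}$ (easier, since $f_+ \ge 0$), then propagate them to $u \in C_J$ via similar truncations, and finally match the four classes using the identity $f = f_+$ on $[0,M_f]$. Throughout, I would use $H(0) = 0$, which is built into the Finsler setting of the paper (or assumed WLOG by subtracting the irrelevant constant $H(0)$ from $H$).

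For $u \in C_{J_+}$ I would test the minimality of $u$ for $J_{u,+}$ against $\tilde u := u_+ \wedge M_f$. Using that $D\tilde u$ is supported on $\{0 < u < M_f\}$ and that $f_+(u) = 0$ on $\{u \ge M_f\}$ by \eqref{propfM}, one obtains
\[
J_{u,+}(\tilde u) - J_{u,+}(u) = \int_{\{u<0\}\cup\{u>M_f\}}\frac{H(0) - H(Du)}{p}\,dx - \int_{\{u<0\}} f_+(u)(-u)\,dx,
\]
with both integrals non-positive by $H \ge 0 = H(0)$ and by $f_+ \ge 0$, $-u \ge 0$ on $\{u<0\}$. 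Minimality reverses this, so equality holds and $H(Du) = 0$ almost everywhere on $\{u<0\}\cup\{u>M_f\}$; \eqref{pcontrol2} then forces $Du = 0$ there. Since $u_-$ and $(u-M_f)_+$ lie in $W^{1,p}_0(\Omega)$ with vanishing weak gradient, they must vanish, proving $0 \le u \le M_f$ and hence $C_{J_+} = C_{J_+}^+$.

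For $u \in C_J$ the upper bound $u \le M_f$ would follow identically (test with $u \wedge M_f$, using $f(u) \le 0$ on $\{u>M_f\}$). For the lower bound $u \ge 0$, testing with $u_+$ would produce a defect whose critical term $\int_{\{u<0\}} f(u)u\,dx$ has the correct sign $\le 0$ on $\{-M_f \le u < 0\}$ but the unfavorable sign $\ge 0$ on $\{u<-M_f\}$. To exclude the latter set I would test additionally with $u \vee (-M_f)$, and couple the two minimality estimates with the Poincar\'e inequality applied to $(u+M_f)_- \in W^{1,p}_0(\Omega)$ (whose weak gradient is $-Du\,\chi_{\{u<-M_f\}}$) together with the growth bound \eqref{growthf} on $|f|$; this system forces $|\{u<-M_f\}| = 0$. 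The original $u_+$-test then has the correct sign throughout and yields $u \ge 0$. Once $0 \le u \le M_f$ is in place, \eqref{propfM} gives $f(u) = f_+(u)$ so that $J_u \equiv J_{u,+}$ and $V_u = V_u^+$, identifying $C_J$ with $C_{J_+}$ and $C_J^+$ with $C_{J_+}^+$; the remaining equality $C_J = C_J^+$ is then immediate given $u \ge 0$ and the inclusion $(W^{1,p}_0)_+ \subseteq V_u$ for $u$ bounded (for any $v \in V_u$ the truncation $v_+$ satisfies $J_u(v_+) \le J_u(v)$ by the same sign analysis as in Step~1).

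The hardest point will be the exclusion of $\{u < -M_f\}$ for $u \in C_J$: minimality alone is insufficient, since $f(u)u$ has the ``favorable'' sign there from the variational standpoint, and only the conjunction of two truncation tests with Poincar\'e-type estimates closes the argument.
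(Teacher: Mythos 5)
Your treatment of $C_{J_+}$ via the single truncation $\tilde u = u_+\wedge M_f$ is correct and in fact a bit cleaner than the paper's, which first tests against $u\wedge M_f$ to get $u\le M_f$ and only later invokes \cite[Corollary 3.6]{M} to conclude $u\ge 0$. The upper bound $u\le M_f$ for $C_J$ by testing with $u\wedge M_f$ also matches the paper. The identification of the four classes once $0\le u\le M_f$ is in hand (so that $f(u)=f_+(u)\ge 0$) is the same as in the paper.

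The genuine gap is your treatment of the lower bound for $u\in C_J$, specifically the exclusion of $\{u<-M_f\}$. You propose to couple the two truncation tests with Poincar\'e and ``the growth bound \eqref{growthf} on $|f|$,'' but \eqref{growthf} is a one-sided bound: it reads $f(t)\le C(t^{p-1}+1)$ and bounds $f$ only from above. On $\{u<-M_f\}$ one has $f(u)\le 0$, and the standing hypotheses give no lower bound on $f$ whatsoever (e.g.\ $f$ may tend to $-\infty$ super-polynomially past $M_f$). The term that must be controlled in your second truncation estimate is $\int_{\{u<-M_f\}} f(u)\,(M_f+u)\,dx$, which is a non-negative quantity of the form $\int |f(u)|\cdot(|u|-M_f)\,dx$, and without any control on $-f$ it cannot be estimated against $\int_{\{u<-M_f\}} H(Du)\,dx$ by Poincar\'e. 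Even granting a hypothetical two-sided bound $|f(t)|\le C(|t|^{p-1}+1)$, the mismatch in orders---Poincar\'e applied to $(u+M_f)_-$ gives $\int(|u|-M_f)^p$ while the reaction contributes $\int|u|^{p-1}(|u|-M_f)$, and for $|u|$ close to $M_f$ the former is of strictly higher order since $p>1$---means the two estimates do not close. So ``minimality alone is insufficient'' is right, but the proposed Poincar\'e fix does not actually resolve it.

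The paper circumvents the lower bound by a symmetry device rather than truncation: it asserts that $\hat u(x)=-u(-x)$ is a critical point for the same type of functional on $-\Omega$ (using that $f$ is even) and applies the already-established upper bound there to read off $\hat u\le M_f$, i.e.\ $u\ge-M_f$; the sharper non-negativity $u\ge 0$ is then obtained by citing \cite[Corollary 3.6]{M}, not by a direct test-function argument. Note that this symmetry step is itself delicate, since $H$ is not assumed even and under $v\mapsto -v(-\cdot)$ both the anisotropy and the sign of the linear term $-f(u)v$ in the semi-linearized functional are affected; you should track this carefully if you decide to adopt the paper's route.
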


\begin{proof}
Denote by $C$ any one of the set $C_J, C_{J_+}, C_J^+$ or $C_{J_+}^+$.
We can assume that $f$ does not vanishes identically, otherwise the claim is trivial as all the previous sets are $\{0\}$. In this case, $M_f>0$ in \eqref{propfM}.
We claim that
\beq
\label{claim1}
u\in C\quad \Longrightarrow\quad |u|\le M_f.
\eeq
Recalling \eqref{elle1}, we test the minimality of $u$ with $v=\min\{u, M_f\}$ to get
\beq
\label{slitt}
\begin{split}
 \int_{\Omega}\frac{1}{p}\, H(Du)-f(u)\, u\, dx&\le  \int_{\Omega}\frac{1}{p}\, H(Dv)-f(u)\, v\, dx\\
 &= \int_{\{u\le M_f\}}\frac{1}{p}\, H(Du)-f(u)\, u\, dx - \int_{\{u>M_f\}}f(u)\, M_f\, dx.
 \end{split}
 \eeq
 Note from \eqref{propfM} and \eqref{elle1} that it holds
 \[
0\le  -\int_{\{u>M_f\}} f(u)\, M_f\, dx\le -\int_{\{u>M_f\}} f(u)\, u\, dx
\]
so that all terms in \eqref{slitt} are finite and rearranging we get
 \[
 \int_{\{u>M_f\}}\frac{1}{p}\, H(Du)-f(u)\, (u-M_f)\, dx\le 0
  \]
  proving that $u\le M_f$ thanks to \eqref{pcontrol2} and \eqref{propfM}. The same conclusion holds if $f$ is replaced by $f_+$, hence \eqref{claim1} is proved if $u\in \big(W^{1,p}_0(\Omega)\big)_+$, so it suffices to consider the case $u\in C=C_J$ or $C_{J_+}$. But in this case, $\hat u(x)=-u(-x)\in C$, where $\Omega$ is replaced by $-\Omega$, so that the previous argument ensures  $\hat u\le M_f$, i.\,e.\,$u\ge -M_f$, which concludes the proof of \eqref{claim1} in all cases.
Therefore $f(u)=f_+(u)\ge 0$ for any $u\in C$, hence the functionals in \eqref{semil} are the same for $f$ or $f_+$. It follows that
    \[
  C_{J_+}=C_J, \qquad C_{J_+}^+=C_J^+.
  \]
 Moreover, by \cite[Corollary 3.6]{M}\footnote{Note that only \eqref{pcontrol2} is used in the proof of point 1 therein.}, any  $u\in C_{J_+}$ is non-negative in $\Omega$, so that  $C_{J_+}\subseteq \big(W^{1,p}_0(\Omega)\big)_+$.
 Finally, again from $f(u)\ge 0$, we get that minimisation over $\big(W^{1,p}_0(\Omega)\big)_+$ of the functional in \eqref{semil} is equivalent to minimisation over the whole $W^{1,p}_0(\Omega)$, since
  \[
  \int_\Omega \frac{1}{p}\, H(Dv)-f(u)\,v\,  dx\ge \int_\Omega \frac{1}{p}\, H(Dv_+)-f(u)\,v_+\,  dx.
  \]
so that $C_{J_+}^+=C_{J_+}$ and the proof is complete.

\end{proof}

By \cite[Theorem 3.2 and Theorem 3.5]{M}, if $\Omega$ is bounded, $\partial\Omega$ is Lipschitz and $H$ fulfils the two-sided bound \eqref{pcontrol}, any of the corresponding energy critical point belongs to $C^\alpha(\overline{\Omega})$, with $\alpha\in\ ]0, 1[$ and its $C^\alpha(\overline\Omega)$ norm depends on $\|u\|_p$, $\Omega$ and the structural constants appearing the bounds for $H$ and $f$. Moreover, on each connected component of $\Omega$ either $u$ vanishes identically or it is strictly positive in $\Omega$.

A particular reaction $f$ and the corresponding energy critical points provide the notion of
{\em first positive Dirichlet eigenfunction}. More precisely, the number
\[
\lambda_{1, H}^+(\Omega)=\inf\left\{\int_\Omega H(Dv)\, dx:v\ge 0, \|v\|_p=1\right\}
\]
is called the first positive Dirichlet eigenvalue and the corresponding minimisers
are the normalised first positive Dirichlet eigenfunctions.
The latter are the non-negative energy critical points for $J$ with $f(t)=\lambda_{1, H}^+(\Omega)\, t^{p-1}$.

We finally prove the following form of the weak comparison principle,
which holds true under very mild assumptions on $H$ and $\Omega$.
It basically ensures that if $u$ solves \eqref{eq iniziale} in $\Omega$ and
$\underline{u}$ solves $-{\rm div}\, (DH(D\underline u))=0$ in $A\subseteq\Omega$,
then $\underline u\le u$ in $A$ as long as the inequality holds true on $\partial A$.
While usually this is deduced through a strong form of convexity for $H$, here we deal with possibly non-strictly convex $H$.

\begin{proposition}\label{comparison}
Let $\Omega$, $H$ and $f$ be as in the previous lemma and $u\in C_J^+$. For $A\subseteq \Omega$ open, let  $\underline{u}\in W^{1,p}(A)$ be a minimiser of
\[
\underline{u}+W^{1,p}_0(A) \ni v\mapsto   \int_A H(Dv)\, dx
\]
 such that $(\underline{u}-u)_+\in W^{1,p}_0(A)$. Then $u\ge \underline{u}$ in $A$.
 \end{proposition}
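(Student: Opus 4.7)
The plan is to compare $u$ and $\underline{u}$ by plugging cross-test functions into the two variational minimality conditions, exploiting the sign of $f(u)$ and the $p$-coercivity \eqref{pcontrol2} of $H$ rather than any form of strict convexity. I would set $B := \{x \in A : \underline{u}(x) > u(x)\}$ and aim to prove $|B| = 0$. Note that by Lemma \ref{lemmaCj} we have $0 \le u \le M_f$ in $\Omega$, and since $f > 0$ on $[0, M_f[$ by \eqref{propfM}, it follows that $f(u) \ge 0$ a.e.\ in $\Omega$.

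The first step is to test the minimality of $u \in C_J^+$ for $J_u$ against the competitor $w := u + (\underline{u} - u)_+$, where $(\underline{u} - u)_+ \in W^{1,p}_0(A)$ is extended by zero outside $A$. Then $w \in (W^{1,p}_0(\Omega))_+$, $w$ agrees with $u$ on $\Omega \setminus B$, and $Dw = D\underline{u}$ a.e.\ on $B$. Hence $J_u(w) \ge J_u(u)$ rewrites as
\beq
\label{cfrSketchA}
\frac{1}{p} \int_B \bigl[H(D\underline{u}) - H(Du)\bigr]\, dx \;\ge\; \int_B f(u)(\underline{u} - u)\, dx \;\ge\; 0.
\eeq
Symmetrically, I would test the minimality of $\underline{u}$ against $\tilde{w} := \min(u, \underline{u}) = \underline{u} - (\underline{u} - u)_+$ which lies in $\underline{u} + W^{1,p}_0(A)$; since $D\tilde{w} = Du$ on $B$ and $D\tilde{w} = D\underline{u}$ on $A \setminus B$, this produces
\beq
\label{cfrSketchB}
\int_B H(Du)\, dx \;\ge\; \int_B H(D\underline{u})\, dx.
\eeq

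Combining \eqref{cfrSketchA} and \eqref{cfrSketchB} collapses the $H$-contributions on $B$ and forces both chains of inequalities to be equalities, so $\int_B f(u)(\underline{u} - u)\, dx = 0$ and $\int_B H(Du)\, dx = \int_B H(D\underline{u})\, dx$. Because $f(u) \ge 0$ and $\underline{u} > u$ on $B$, the first identity forces $f(u) \equiv 0$ a.e.\ on $B$; the structure \eqref{propfM} of $f$ then gives $u = M_f$ a.e.\ on $B$, so $Du = 0$ a.e.\ on $B$ by the standard vanishing of Sobolev gradients on level sets. Consequently $\int_B H(Du)\, dx = 0$, so $\int_B H(D\underline{u})\, dx = 0$ as well, and \eqref{pcontrol2} yields $D\underline{u} = 0$ a.e.\ on $B$. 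Therefore $D(\underline{u} - u)_+$ vanishes a.e.\ on $B$ (where it equals $D\underline{u} - Du$) and trivially on $A \setminus B$. Since $(\underline{u} - u)_+ \in W^{1,p}_0(A)$ and $A \subseteq \Omega$ has finite measure, Poincar\'e's inequality forces $(\underline{u} - u)_+ \equiv 0$, giving $\underline{u} \le u$ in $A$.

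The main delicacy is precisely that $H$ is not assumed strictly convex, so the standard monotonicity-of-$DH$ argument is unavailable. The workaround is the symmetric combination of the two minimality inequalities: rather than either one in isolation, their interplay cancels the $H$-terms on $B$ and isolates the single non-negative scalar $\int_B f(u)(\underline{u} - u)$, which can be pinned to zero and then, through the structure of $f$ on $[0, M_f]$ and the coercivity of $H$, transferred to a vanishing-gradient statement that closes the argument via Poincar\'e.
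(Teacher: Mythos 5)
Your proof is correct and follows essentially the same route as the paper's: the same pair of cross-competitors $\max\{u,\underline{u}\}$ and $\min\{u,\underline{u}\}$, the same addition of the two minimality inequalities to cancel the $H$-contributions on $B$ and pin $\int_B f(u)(\underline{u}-u)\,dx$ to zero, and the same reduction to vanishing Sobolev gradients closed by Poincar\'e. The only cosmetic divergence is that the paper first establishes $\underline{u}\le M_f$ (by testing $\underline{u}$ against $\min\{\underline{u},M_f\}$) and then concludes $\{\underline{u}>u\}\subseteq\{u=0\}$ rather than $\{u=M_f\}$, thereby also covering the case $f(0)=0$, but since $Du$ vanishes a.e.\ on either level set your argument closes identically and in fact dispenses with that preliminary step.
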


 \begin{proof}
 Fix a representative of $u$ and $\underline{u}$, noting that using the assumption $(\underline{u}-u)_+\in W^{1,p}_0(A)$ this can be done in such a way that $\{\underline{u}>u\}\subseteq A$. By the previous Lemma we can use $f_+$ instead of $f$ in the definition of $J$, hence we can assume $f\ge 0$.
By the previous Lemma it holds  $0\le u\le M_f$, with $M_f$ given in \eqref{propfM}, possibly infinite.  If $M_f$ is finite from $(\underline{u}-M_f)_+\le (\underline{u}-u)_+\in W^{1,p}_0(A)$ we infer that $\min\{\underline{u}, M_f\}=\underline{u}-(\underline{u}-M_f)_+\in \underline{u}+W^{1,p}_0(A)$, hence by the minimality of $\underline{u}$ againts the competitor $\min\{\underline{u}, M_f\}$ we find
 \[
 \int_A H(D\underline{u})\, dx\le \int_{\{\underline{u}< M_f\}} H(D\underline{u})\, dx
 \]
 therefore
 \[
 \int_{\{\underline{u}\ge M_f\}} H(D\underline{u})\, dx\le 0
 \]
 and thus $\underline{u}\le M_f$ in $A$.
 Since $W^{1,p}_0(A)\subseteq W^{1,p}_0(\Omega)$  by extending each element of the former as $0$ outside $A$, we can test the minimality of $u$ against $\max\{u, \underline{u}\}=u+(\underline{u}-u)_+\in W^{1,p}_0(\Omega)$ for the functional in \eqref{semil}. This gives
 \[
\int_\Omega  \frac{1}{p}\, H(Du)-f(u)\, u\, dx\le  \int_{\{\underline{u}> u\}} \frac{1}{p}\,H(D\underline{u})-f(u)\, \underline{u}\, dx
 + \int_{\{\underline{u}\le  u\}} \frac{1}{p}\,H(Du)-f(u)\, u\, dx
 \]
 so that, recalling \eqref{elle1},
 \beq
 \label{eqcomp}
   \int_{\{\underline{u}>  u\}} \frac{1}{p}\,H(Du)-f(u)\, u\, dx\le  \int_{\{\underline{u}>  u\}}\frac{1}{p}\,  H(D\underline{u})-f(u)\, \underline{u}\, dx.
  \eeq
 On the other hand, by the minimality of $\underline{u}$ against the competitor $\min\{u, \underline{u}\}=\underline u-(\underline{u}-u)_+\in \underline{u}+ W^{1,p}_0(A)$, we have
 \[
 \int_A H(D\underline{u})\, \le \int_{\{\underline{u}\le u\}} H(D\underline{u})\, dx+\int_{\{\underline{u}> u\}}H(Du)\, dx
 \]
 so that
 \[
 \int_{\{\underline{u}> u\}}H(D\underline{u})\, dx\le \int_{\{\underline{u}> u\}}H(Du)\, dx.
 \]
 Inserting the latter into \eqref{eqcomp} and rearranging we obtain
 \[
\int_{\{\underline{u}>  u\}}  f(u)\, (\underline{u}-u)\, dx\le  0.
  \]
 Since  $0\le u\le M_f$ and $f(t)>0$ for $t\in \ ]0, M_f[$, the previous inequality forces $f(u)\, (\underline{u}-u)=0$ a.\, e.\, on $\{\underline{u}>u\}$ and that equality holds in \eqref{eqcomp}. In particular a.\,e.\,point in $\{\underline{u}>u\}$ belongs to either $\{u=0\}$ or $\{u=M_f\}$. Being $\underline{u}\le M_f$ a.\,e., the second case cannot occur in a set of positive measure and thus $\{\underline{u}>u\}\subseteq \{u=0\}\cap A$ a.\,e..

 The equality in \eqref{eqcomp} then reads
 \[
 \int_{\{\underline{u}>  u\}} H(Du)\, dx =\int_{\{\underline{u}>  u\}} H(D\underline{u})\, dx
 \]
 and the left hand side vanishes since $Du=0$ a.\,e.\,on $\{u=0\}$. Moreover, still from $\{\underline{u}>u\}\subseteq \{u=0\}\cap A$, we see that $\underline{u}=(\underline{u}-u)_+$ a.\,e.\,on $A$, hence
 \[
0=\int_{\{\underline{u}>  u\}} H(D\underline{u})\, dx=\int_A H(D(\underline{u}-u)_+)\, dx
\]
which implies that $\underline{u}\le u$ a.\,e.\,in $A$ by \eqref{pcontrol2}.
\end{proof}

\subsection{Existence and uniqueness}

The existence of non-negative energy critical points can be variationally characterised for $H$ being positively $p$-homogeneous and $f$ fulfilling \eqref{hypf}.
Indeed, in this case  \cite[Theorem 5.3, point 1]{M}  provides
\beq
\label{varchar}
C_J^+={\rm Argmin}\,  J
\eeq
where, by \eqref{proCj+}, the minimisation problem on the right can be equivalently settled on either $\big(W^{1,p}_0(\Omega)\big)_+$ or $W^{1,p}_0(\Omega)$.

 Necessary and sufficient conditions on $f$ for the existence of positive critical points under assumption \eqref{hypf} have been derived for $\partial\Omega$ of class $C^2$ and $H(z)=|z|^p$ (see \cite[Proposition 3.8]{BoMoSq1}) through the Hopf Lemma. Here we consider the case of possibly non-smooth $H$ and $\Omega$, so that the Hopf lemma does not hold.

 \begin{proposition}\label{proBO}
 Let $\Omega\subseteq \R^N$ be an open, connected set with finite measure, $H\in C^0(\R^N)$ be convex, positively $p$-homogeneous and vanishing only at the origin and $f\in C^0(\R)$ be even and such that \eqref{hypf} holds true. Then either $C_J^+\setminus\{0\}$ consists of first positive Dirichlet eigenfunctions or
 \beq
 \label{proBOeq}
 C_J^+\setminus\{0\}\ne \emptyset \quad \iff \quad \lim_{t\to+\infty} \frac{f(t)}{t^{p-1}}<\lambda_{1, H}^+(\Omega)<\lim_{t\to 0^+}\frac{f(t)}{t^{p-1}}.
\eeq
\end{proposition}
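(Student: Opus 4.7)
\emph{Plan of proof.} Write $g(t):=f(t)/t^{p-1}$, so that \eqref{hypf} reads as the monotonicity of $g$ on $\,]0,+\infty[$; denote
\[
\alpha:=\lim_{t\to 0^+}g(t)\in(-\infty,+\infty],\qquad \beta:=\lim_{t\to+\infty}g(t)\in[-\infty,\alpha].
\]
Integration gives the global bounds $\beta\,t^p/p\le F(t)\le \alpha\,t^p/p$ for $t\ge 0$, with the quantitative refinements $F(s)\ge(\alpha-\varepsilon)s^p/p$ for $s\in[0,\delta_\varepsilon]$ and $F(s)\le(\beta+\varepsilon)s^p/p+C_\varepsilon$ globally. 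By \eqref{varchar} the statement is equivalent to an existence/non-existence result for minimisers of $J$ on $W^{1,p}_0(\Omega)$; the finiteness of $|\Omega|$ ensures, via Schwarz symmetrisation, both $\lambda_{1,H}^+(\Omega)>0$ and the existence of a normalised first positive Dirichlet eigenfunction $e_1$, which will play the role of privileged test function.

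\emph{Sufficiency.} Assuming $\beta<\lambda_{1,H}^+(\Omega)<\alpha$, pick $\varepsilon,\delta>0$ with $\beta+\varepsilon<(1-\delta)\lambda_{1,H}^+(\Omega)$ and $\alpha-\varepsilon>\lambda_{1,H}^+(\Omega)$. Splitting $\int H(Dv)\,dx$ as $(1-\delta)\int H(Dv)\,dx+\delta\int H(Dv)\,dx$ and applying the Rayleigh inequality $\int H(Dv)\,dx\ge \lambda_{1,H}^+(\Omega)\|v\|_p^p$ to the first piece, \eqref{pcontrol2} to the second, together with the upper bound on $\int F(v)\,dx$, yields
\[
J(v)\ge \frac{(1-\delta)\lambda_{1,H}^+(\Omega)-\beta-\varepsilon}{p}\|v\|_p^p+\frac{\delta}{pC}\|Dv\|_p^p-C_\varepsilon|\Omega|,
\]
so $J$ is coercive on $W^{1,p}_0(\Omega)$. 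Combined with the sequential weak lower-semicontinuity recalled in Section \ref{SECP}, the direct method yields a minimiser $u$. Non-triviality follows from $J(te_1)<0=J(0)$ for $t>0$ small, using $F(te_1(x))\ge(\alpha-\varepsilon)(te_1(x))^p/p$ pointwise and $\alpha-\varepsilon>\lambda_{1,H}^+(\Omega)$.

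\emph{Necessity.} Let $u\in C_J^+\setminus\{0\}$. Since $u$ minimises the convex semilinearised functional $J_u$ in \eqref{semil} over $(W^{1,p}_0(\Omega))_+$, testing with $(1+s)u$ for $s$ in a neighbourhood of zero and using the positive $p$-homogeneity of $H$ produces the Pohozaev-type identity $\int H(Du)\,dx=\int f(u)u\,dx$, whence
\[
J(u)=\int_\Omega\!\int_0^{u(x)}\!\bigl(g(u(x))-g(s)\bigr)\,s^{p-1}\,ds\,dx\le 0
\]
by the monotonicity of $g$. If $\beta>\lambda_{1,H}^+(\Omega)$ then $F\ge \beta t^p/p$ forces $J(te_1)\le (\lambda_{1,H}^+(\Omega)-\beta)\|e_1\|_p^p\,t^p/p\to-\infty$, contradicting the finiteness of $J(u)$; if $\alpha<\lambda_{1,H}^+(\Omega)$ then $F\le \alpha t^p/p$ together with Rayleigh give $J>0$ on $W^{1,p}_0(\Omega)\setminus\{0\}$, contradicting $J(u)\le 0$. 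Hence $\beta\le \lambda_{1,H}^+(\Omega)\le \alpha$. In the equality case $\alpha=\lambda_{1,H}^+(\Omega)$ the chain
\[
\tfrac{\lambda_{1,H}^+(\Omega)}{p}\|u\|_p^p\le \tfrac{1}{p}\int H(Du)\,dx\le \int F(u)\,dx\le \tfrac{\alpha}{p}\|u\|_p^p
\]
collapses to equalities, forcing $u$ to be a first positive Dirichlet eigenfunction and $g\equiv\alpha$ on the range of $u$; the case $\beta=\lambda_{1,H}^+(\Omega)$ is handled analogously by applying the Pohozaev identity and passing to the limit $t\to+\infty$ in $J(u)\le J(te_1)$ to force $g\equiv\beta$ on the range of $u$. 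The strict positivity of $u$ on the connected $\Omega$ recalled after Lemma \ref{lemmaCj} guarantees this range is the full interval $[0,\|u\|_\infty]$, placing $u$ in the first-eigenfunction alternative.

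\emph{Main obstacle.} The delicate point is the rigidity analysis in the equality cases of the necessity direction, particularly at $\beta$: one must upgrade an a.e.\ identity on $F(u(x))$ into the pointwise statement $f(t)\equiv\lambda_{1,H}^+(\Omega)\,t^{p-1}$ on the full interval $[0,\|u\|_\infty]$, which relies both on the strict positivity of $u$ throughout the connected $\Omega$ and on a careful use of the monotonicity of $g$ to promote an a.e.\ identity on the range of $u$ to the whole interval—precisely what isolates the degenerate ``first-eigenfunction'' branch of the dichotomy from the generic strict-gap regime.
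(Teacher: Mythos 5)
Your proof is correct, and the sufficiency direction matches the paper's essentially word for word (coercivity from the growth of $F$ near infinity, direct method, nontriviality via $J(te_1)<0$ for small $t$). The necessity direction, however, takes a genuinely different route. You re-derive the Pohozaev identity $\int H(Du)\,dx=\int f(u)u\,dx$ yourself by scaling, package it with the monotonicity of $g=f(t)/t^{p-1}$ into the clean sign bound $J(u)\le 0$, and then extract both limits $\beta\le\lambda_{1,H}^+\le\alpha$ by comparing $J(u)$ with $J(te_1)$ and exploiting the global bounds $\beta t^p/p\le F(t)\le\alpha t^p/p$. The paper instead appeals directly to \cite[Corollary~2.7]{M} for the Pohozaev identity and, crucially, to the auxiliary test-function inequality \cite[eq.~(5.7)]{M} (namely $\int H(Dv)\,dx\ge\int\frac{f(u)}{u^{p-1}}v^p\,dx$ for bounded $v\ge 0$) to control the limit $\mu_\infty=\beta$, testing it with $v=e_1$. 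Your route avoids that inequality entirely, at the cost of a slightly more delicate rigidity analysis. Specifically, the equality case $\alpha=\lambda_{1,H}^+$ is cleanly handled by a collapsing chain of inequalities, but the case $\beta=\lambda_{1,H}^+$ is not ``analogous'': the bound $F\ge\beta t^p/p$ points the wrong way, so you must instead pass to the limit $t\to+\infty$ in $J(u)\le J(te_1)=-\int_\Omega\int_0^{te_1}(g(s)-\beta)s^{p-1}\,ds\,dx$, combine it with the lower bound $J(u)\ge-\int_\Omega\int_0^{u}(g(s)-\beta)s^{p-1}\,ds\,dx$ coming from $g(u)\ge\beta$, and then argue that the sandwich forces $\int_\Omega(g(u)-\beta)u^p\,dx=0$, hence $g(u)\equiv\beta$ a.e.\ on the (positive) range of $u$. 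You flag this as the delicate point, and you are right to: as written, ``passing to the limit'' is only a sketch, and the double-inequality argument is what actually closes it. Once that is spelled out, the proof is complete and the additional observation that $g$ must equal $\lambda_{1,H}^+$ on the full interval $[0,\|u\|_\infty]$ (via connectedness and continuity) is a nice bonus, though what is strictly needed is only that $f(u(x))=\lambda_{1,H}^+u(x)^{p-1}$ a.e., which then identifies $u$ as a first positive Dirichlet eigenfunction through \eqref{varchar} for the linear reaction. One minor inaccuracy: Schwarz symmetrisation is not what gives $\lambda_{1,H}^+(\Omega)>0$ for general anisotropic $H$; the Poincar\'e inequality together with the coercivity bound $H(z)\ge|z|^p/C$ suffices and is what the paper implicitly uses.
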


\begin{proof}
Suppose that
\[
\mu_\infty:= \lim_{t\to+\infty} \frac{f(t)}{t^{p-1}}<\lambda_{1, H}^+(\Omega)<\lim_{t\to 0^+}\frac{f(t)}{t^{p-1}}=:\mu_0
\]
holds true. Then for a fixed $\eps\in \ ]0, \lambda_{1, H}^+(\Omega)-\mu_\infty[$ there exists $t_0>0$ such that
\[
F(t)\le (\lambda_{1, H}^+(\Omega)-\eps)\, \frac{t^p}{p}\qquad \text{for $t>t_0$}
\]
hence for any $v\in \big(W^{1,p}_0(\Omega)\big)_+$ it holds
\[
J(v)\ge \int_\Omega \frac{1}{p}\, H(Dv)\, dx-\int_{v\le t_0} F(v)\, dx-(\lambda_{1, H}^+(\Omega)-\eps)\int_\Omega \frac{v^p}{p}\, dx
\]
so that by the definition of $\lambda_{1, H}^+(\Omega)$
\[
J(v)\ge \frac{\eps}{p\, \lambda_{1, H}^+(\Omega)} \int_\Omega H(Dv)\, dx-\sup_{[0,  t_0]} F\, |\Omega|
\]
which implies coercivity of $J$ on $\big(W^{1,p}_0(\Omega))_+$ thanks to \eqref{pcontrol} (which still holds true under the present, weaker assumption on $H$). Therefore $J$ admits a minimiser $u\in C_J^+(\Omega)$. To prove that $u\ne 0$, note that from $\mu_0>\lambda_{1, H}^+(\Omega)$ we infer that for some positive $\eps$ and $t_1$
\[
F(t)\ge (\lambda_{1, H}^+(\Omega)+\eps) \, \frac{t^p}{p}\qquad \text{for $t\in [0, t_1]$}.
\]
Therefore, if $w$ is a first positive normalised Dirichlet eigenfunction (which is bounded), for sufficiently small $t>0$  we have
\[
J(t\, w)\le \frac{t^p}{p}\int_\Omega H(Dw) - (\lambda_{1, H}^+(\Omega)+\eps)\, w^p\, dx=-\eps\, \frac{t^p}{p}<0.
\]
Thus $J(u)<0$ and $u\in C_J^+\setminus\{0\}$.

Vice-versa, suppose that $u\in C_J^+\setminus\{0\}$ and that $u$ is not a first positive Dirichlet eigenfunction. By \cite[Corollary 2.7]{M} it holds
\[
\int_\Omega H(Du)\, dx=\int_\Omega f(u)\, u\, dx
\]
so by the definition of $\lambda_{1, H}^+(\Omega)$ and \eqref{hypf}
\[
\lambda_{1, H}^+(\Omega)\, \int_\Omega u^p\, dx\le \int_\Omega H(Du)\, dx=\int_\Omega f(u)\, u\, dx=\int_\Omega \frac{f(u)}{u^{p-1}}\, u^p\, dx\le \mu_0\, \int_\Omega u^p\, dx,
\]
proving that $\mu_0\ge \lambda_{1, H}^+(\Omega)$ and furthermore that $\mu_0>\lambda_{1, H}^+(\Omega)$, since otherwise the previous inequalities are all equalities, forcing
\[
f(t)=\lambda_{1, H}^+(\Omega)\, t^p\qquad\text{ on $u(\Omega)$},
\]
 i.\,e.\,that $u$ is a first positive Dirichlet eigenfunction.
To prove that $\mu_\infty< \lambda_{1, H}^+(\Omega)$, recall that for any bounded $v\in \big(W^{1,p}_0(\Omega)\big)_+$ it holds (see \cite[ eq. (5.7)]{M})
\[
\int_\Omega H(v)\, dx\ge \int_\Omega \frac{f(u)}{u^{p-1}}\, v^p\, dx
\]
and in particular the integrand on the right is in $L^1(\Omega)$.
Choosing $v$ to be a first positive Dirichlet eigenfunction   in the previous inequality,  we obtain by \eqref{hypf}
\[
\lambda_{1, H}^+(\Omega)\, \int_\Omega v^p\, dx=\int_\Omega H(v)\, dx\ge \int_\Omega \frac{f(u)}{u^{p-1}}\, v^p\, dx\ge \mu_\infty\, \int_\Omega v^p\, dx
\]
so that $\mu_\infty\le \lambda_{1, H}^+(\Omega)$ and, as before, equality holds if and only if $u$ is a first positive Dirichlet eigenfunction.

\end{proof}

Regarding uniqueness, we recall the following result from \cite[Theorem 5.3]{M}.

\begin{proposition}\label{prouniqueness}
Under the assumptions of the previous proposition, let $u\in C_J^+\setminus\{0\}$. Then either $u$ is a first positive Dirichlet eigenfunction or $u$ is the unique positive energy critical point for $J$ in the following cases:
\begin{enumerate}
\item
$H$ is strictly convex
\item
$t\mapsto f(t)/t^{p-1}$ is strictly decreasing on $u(\Omega)$.
\end{enumerate}
Moreover, if $H$ is strictly convex the first positive Dirichlet eigenvalue is simple, meaning that any other first positive Dirichlet eigenfunction is a positive  scalar multiple of $u$.

\end{proposition}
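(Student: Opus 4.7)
My strategy rests on the hidden convexity of $J$ along the $L^p$-geodesic
\[
u_t = \bigl((1-t)\, u_0^p + t\, u_1^p\bigr)^{1/p}, \qquad t\in[0,1],
\]
joining two given elements $u_0, u_1 \in C_J^+ \setminus\{0\}$, both strictly positive on the connected domain $\Omega$ by \cite[Theorem 3.2]{M}. Differentiating the identity $u_t^p = (1-t)\, u_0^p + t\, u_1^p$ and dividing by $u_t^p$ exhibits $Du_t/u_t$ as the convex combination of $Du_0/u_0$ and $Du_1/u_1$ with strictly positive weights $(1-t)\,(u_0/u_t)^p$ and $t\,(u_1/u_t)^p$. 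Convexity and positive $p$-homogeneity of $H$ then yield the pointwise inequality
\[
H(Du_t) \le (1-t)\, H(Du_0) + t\, H(Du_1),
\]
while the affinity of $t \mapsto u_t^p$ combined with the concavity of $s \mapsto F(s^{1/p})$ on $\R_+$, which by \eqref{hypf} is part of the running hypothesis of Proposition~\ref{proBO}, gives the pointwise inequality $F(u_t) \ge (1-t)\, F(u_0) + t\, F(u_1)$.

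Integrating and combining the two estimates, $J(u_t) \le (1-t)\, J(u_0) + t\, J(u_1)$. By the variational characterisation \eqref{varchar} both $J(u_0)$ and $J(u_1)$ equal $\min J$, while $u_t \in \bigl(W^{1,p}_0(\Omega)\bigr)_+$ forces $J(u_t) \ge \min J$. Hence equality holds throughout, and both pointwise inequalities above are equalities for every $t \in (0,1)$ and a.e.\ $x \in \Omega$.

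I then split according to the hypothesis. In case \textbf{(2)}, strict decrease of $f(t)/t^{p-1}$ on $u_1(\Omega)$ amounts to strict concavity of $s \mapsto F(s^{1/p})$ on the relevant range; the equality in the $F$-inequality then forces $u_0 = u_1$ a.e. In case \textbf{(1)}, strict convexity of $H$ applied to the equality in the $H$-inequality, with the two weights being strictly positive, yields $D\log u_0 = D\log u_1$ a.e.\ in $\Omega$; connectedness of $\Omega$ and strict positivity of $u_0, u_1$ then give $u_0 = c\, u_1$ for some constant $c > 0$. Assume $c \neq 1$: equality in the pointwise $F$-inequality becomes the affinity of $\sigma \mapsto F(\sigma^{1/p})$ on every interval $[\min(c^p,1)\, u_1^p(x),\, \max(c^p,1)\, u_1^p(x)]$; these intervals cover, by a standard iterative rescaling, the whole $[0, (\max u_1)^p]$, so $F(t) = \lambda\, t^p/p$ on $u_1(\Omega)$ for some $\lambda > 0$. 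Testing the Euler--Lagrange identity from \cite[Corollary 2.7]{M} with $u_1$ then yields $\lambda = \int_\Omega H(Du_1)/\int_\Omega u_1^p$, while $J(u_1) = \min J = 0$ (by direct substitution) forces this Rayleigh quotient to equal $\lambda_{1,H}^+(\Omega)$, identifying $u_1$ as a first positive Dirichlet eigenfunction and closing the dichotomy. The simplicity statement finally follows by applying case \textbf{(1)} to the reaction $f(t) = \lambda_{1,H}^+(\Omega)\, t^{p-1}$.

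The main delicate point I anticipate is the rigidity step of Jensen's inequality for a merely continuous strictly convex $H$: one must observe that the equality $H(\alpha_0\, v_0 + \alpha_1\, v_1) = \alpha_0\, H(v_0) + \alpha_1\, H(v_1)$ with $\alpha_i > 0$ forces $v_0 = v_1$, which holds since a strictly convex function is strictly convex on every non-degenerate segment of $\R^N$.
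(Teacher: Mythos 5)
The paper itself gives no proof of this proposition: it is imported verbatim from \cite[Theorem~5.3]{M}, so there is no in-paper argument to compare against. Your strategy --- hidden convexity of $J$ along the $L^p$-geodesic $u_t=((1-t)u_0^p+tu_1^p)^{1/p}$, i.e.\ the D\'iaz--Sa\'a / Brezis--Oswald method --- is the standard route for such uniqueness statements and is almost certainly the one carried out in \cite{M}. The two pointwise inequalities, the rigidity analysis in cases (1) and (2), the overlap argument forcing affinity of $s\mapsto F(s^{1/p})$ on the full range of $u_1^p$ when $c\neq1$, and the identification of $u_1$ as a first eigenfunction via the Euler--Lagrange identity are all sound.

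The one point you should not leave implicit is the admissibility of $u_t$: you use $J(u_t)\ge \min J$ without checking $u_t\in W^{1,p}_0(\Omega)$. Two facts are needed. First, the map $(a,b)\mapsto((1-t)a^p+tb^p)^{1/p}$ is globally Lipschitz on $[0,\infty)^2$, so the chain rule gives $u_t\in W^{1,p}(\Omega)$ and makes the pointwise gradient formula rigorous; moreover $|Du_t|^p\le C\,H(Du_t)\le C\big((1-t)H(Du_0)+tH(Du_1)\big)\in L^1(\Omega)$. Second, a sandwich argument: $0\le u_t\le(u_0^p+u_1^p)^{1/p}\le u_0+u_1\in W^{1,p}_0(\Omega)$ together with $u_t\in W^{1,p}(\Omega)$ yields $u_t\in W^{1,p}_0(\Omega)$. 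Without this the inequality $J(u_t)\le(1-t)J(u_0)+tJ(u_1)$ has no minimiser on the left to compare to. Two smaller remarks: in case (2) the strict concavity is only postulated on $u(\Omega)$, but since $u(\Omega)=(0,\max u]$ (continuity, positivity, connectedness) any nondegenerate interval $[\min(u_0^p(x),u_1^p(x)),\max(u_0^p(x),u_1^p(x))]$ meets $(0,(\max u)^p]$ in an interval, which is enough for the contradiction, and this step should be spelled out; and passing from $D\log u_0=D\log u_1$ a.e.\ to $u_0=c\,u_1$ on all of $\Omega$ uses that $\log u_i\in W^{1,p}_{\rm loc}(\Omega)$, which holds because each $u_i$ is continuous and locally bounded away from zero in $\Omega$.
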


\section{Regularised problems}

In this section we construct regular problems approximating \eqref{eq iniziale} and derive the relevant regularity properties of the solutions, together with their boundary behaviour.

\subsection{Approximation scheme}

\begin{lemma}\label{lemmaquad}
Let $G\in C^\infty(\R^N)$ be such that
\[
G(0)=0, \qquad D^2G(z)\ge \lambda\, {\rm Id}\quad \forall z\in \R^N,
\]
 for fixed $\lambda>0$ and set
\[
\f(z)=\inf\left\{t>0: G(z/t)\le 1\right\}.
\]
Then $\f\in C^\infty(\R^N\setminus\{0\})$, is positively $1$-homogeneous and
\begin{equation}
\label{strictell}
\left(D^2\f(z)\, v, v\right)\ge \widehat{\lambda}\, |v|^2, \qquad \forall z, v \ \text{such that $\Phi(z)=1$,
$\left(D\Phi(z), v\right)=0$}
\end{equation}
for $\widehat{\lambda}>0$ depending on $G$ and $\lambda$.
\end{lemma}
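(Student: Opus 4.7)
The plan is to interpret $\f$ as the Minkowski functional of the strongly convex body $K=\{G\le 1\}$ and transfer the strong convexity of $G$ to a strict tangential convexity of $\f$ via an elementary computation on the common level set $\{G=1\}=\{\f=1\}$.

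First I would verify that $K$ is a compact convex set containing the origin in its interior. Convexity is obvious; $G(0)=0<1$ supplies the interior point, and the strong-convexity lower bound
$$
G(z)\ge G(0)+(DG(0),z)+\tfrac{\lambda}{2}|z|^2=(DG(0),z)+\tfrac{\lambda}{2}|z|^2
$$
forces $G(z)\to+\infty$, so $K$ is bounded. The Minkowski functional of such a $K$ is automatically positively $1$-homogeneous, vanishes only at $0$, and satisfies $\{\f=1\}=\partial K$.

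Next I would establish $\f\in C^\infty(\R^N\setminus\{0\})$ by applying the implicit function theorem to $G(z/t)-1=0$. The essential nondegeneracy is $(DG(y),y)>0$ for $y\in\partial K$, which follows from the convexity inequality $G(0)\ge G(y)+(DG(y),-y)$, giving $(DG(y),y)\ge G(y)-G(0)=1$. Differentiating $G(z/\f(z))=1$ then yields the explicit formula
$$
D\f(z)=\frac{DG(z/\f(z))}{(DG(z/\f(z)),\,z/\f(z))},\qquad z\ne 0.
$$
In particular $D\f(y)$ is a positive scalar multiple of $DG(y)$ for $y\in\partial K$, so $(D\f(y),v)=0$ is equivalent to $v$ being tangent to $\partial K$ at $y$.

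For \eqref{strictell}, fix such $y$ and tangent $v$, and pick a $C^2$ curve $\gamma\colon(-\eps,\eps)\to\partial K$ with $\gamma(0)=y$, $\gamma'(0)=v$ (available by smoothness of $\partial K$). Differentiating $G\circ\gamma\equiv 1\equiv \f\circ\gamma$ twice at $s=0$ gives the two identities
$$
(D^2G(y)v,v)=-(DG(y),\gamma''(0)),\qquad (D^2\f(y)v,v)=-(D\f(y),\gamma''(0)),
$$
and inserting $D\f(y)=DG(y)/(DG(y),y)$ produces the clean relation
$$
(D^2\f(y)v,v)=\frac{(D^2G(y)v,v)}{(DG(y),y)}\ge \frac{\lambda}{(DG(y),y)}\,|v|^2.
$$
Compactness of $\partial K$ together with continuity of $DG$ bounds $(DG(y),y)\le M$ uniformly on $\partial K$, and this gives \eqref{strictell} with $\widehat\lambda=\lambda/M$.

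The main obstacle is organisational rather than technical: one has to ensure that $\f$ is smooth enough on $\R^N\setminus\{0\}$ for the curve argument to be legitimate, and that $(D\f(y),v)=0$ really encodes tangentiality to the common level set $\{G=1\}=\{\f=1\}$. Once this identification is in place, the quadratic form identity relating $D^2\f$ and $D^2G$ on tangent directions is automatic from level-set calculus, and the uniform ellipticity constant $\widehat\lambda$ is produced by the compactness of $\partial K$.
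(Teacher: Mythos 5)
Your argument is correct and arrives at the same crucial identity as the paper: for $y$ on the common level set $\{G=1\}=\{\Phi=1\}$ and $v$ tangent to it,
\[
\bigl(D^2\Phi(y)\,v,v\bigr)=\frac{\bigl(D^2G(y)\,v,v\bigr)}{\bigl(DG(y),y\bigr)},
\]
giving $\widehat\lambda=\lambda/\sup_{\{G=1\}}(DG(z),z)$. The route is genuinely different, though. The paper differentiates the implicit relation $G(z/\Phi(z))=1$ twice in full matrix form, producing a fairly unwieldy identity involving $D\Phi\otimes DG$ and $z\otimes D\Phi$ terms, and only afterwards sets $\Phi(z)=1$ and pairs with a tangent $v$ to make everything collapse. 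Your curve argument is the classical second-fundamental-form computation on the hypersurface $\partial K$: because $D\Phi(y)$ is a positive multiple of $DG(y)$ on $\partial K$, the two quadratic forms restricted to the tangent space are proportional, and the proportionality constant is read off instantly from $D\Phi(y)=DG(y)/(DG(y),y)$. This is cleaner and more transparent; it also forces you to justify the tangent-curve setup, which you do by establishing $(DG(y),y)\ge 1$ via the convexity inequality $G(0)\ge G(y)-(DG(y),y)$, and deriving smoothness of $\Phi$ from the implicit function theorem — preliminaries the paper simply asserts as standard for Minkowski functionals of strongly convex bodies. Net, your proof trades the paper's brute-force matrix differentiation for a geometric argument with an explicit nondegeneracy check; both are valid, and yours is somewhat more self-contained.
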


\begin{proof}
The assumptions on $G$ ensure that the latter is strongly convex,
so that $\f$, being the Minkowski functional of $\{G\le 1\}$,
is automatically  $C^\infty(\R^N\setminus\{0\})$ and positively $1$-homogeneous.
By construction it holds $\{G=1\}=\{\Phi=1\}$ and
\[
G\left(z/\f(z)\right)=1\qquad \forall z\ne 0
\]
which, differentiated, gives
\[
\frac{DG(z/\f(z))}{\f(z)} -\frac{D\f(z)\, \left(DG(z/\f(z)), z\right)}{\f^2(z)}=0,
\]
or
\beq
\label{MG}
\f(z)\, DG(z/\f(z))=D\f(z)\left(DG(z/\f(z)), z\right).
\eeq
Differentiating once more, we obtain
\[
\begin{split}
&D\f(z)\otimes DG(z/\f(z))+ D^2G(z/\f(z))\, \left({\rm Id}-\frac{z\otimes D\f(z)}{\f(z)}\right)=
\left(DG(z/\f(z)), z\right)\, D^2\f(z)\\
&\quad +D\f(z)\otimes \left(DG(z/\f(z))+z\, D^2G(z/\f(z))\left(\frac{{\rm Id}}{\f(z)}-\frac{z\otimes D\f(z)}{\f^2(z)}\right)\right).
\end{split}
\]
For  $\f(z)=1$  we thus have
\[
\begin{split}
&D\f(z)\otimes DG(z)+ D^2G(z)\, \left({\rm Id}-z\otimes D\f(z)\right)=\left(DG(z), z\right)\, D^2\f(z)\\
&\quad +D\f(z)\otimes \left(DG(z)+z\, D^2G(z)\left({\rm Id}-z\otimes D\f(z)\right)\right).
\end{split}
\]
For such $z$'s, if $v$ obeys   $\left(D\f(z), v\right)=0$, we have
\[
D^2G(z)\, v=\left(DG(z), z\right)\, D^2\f(z)\, v+D\f(z)\, \left( z\, D^2G(z),  v\right)
\]
and taking the scalar product with $v$ provides by $(D\f(z), v)=0$
\[
\left(D^2G(z)\, v, v\right)=\left(DG(z), z\right)\, \left(D^2\f(z)\, v, v\right).
\]
The claim is thus proved with
\[
\widehat{\lambda}=\frac{\lambda}{\sup_{\{G=1\}}\left(DG(z), z\right)}.
\]
\end{proof}

\begin{proposition}\label{approuno}
Let $\Omega\subseteq \R^N$ be open, connected and with finite measure. Suppose that
\begin{enumerate}
\item
$H:\R^N\to [0, +\infty[$ is convex, positively $p$-homogeneous vanishes only at the origin
\item
$f\in C^0(\R)$ is even and fulfils \eqref{hypf}
\end{enumerate}
and  that either the convexity of $H$ or the monotonicity of $\R_+\ni t\mapsto f(t)/t^{p-1}$ are strict.
If $u\in  C_J^+\setminus\{0\}$  is not a first positive Dirichlet eigenfunction, there exists a sequence of convex, positively $p$-homogeneous  $H_n\in C^1(\R^N)\cap C^\infty(\R^N\setminus\{0\})$ such that
\beq
\label{stimeH}
\lambda_n\, |v|^2\, |z|^{p-2}\le \left(D^2H_n(z)\, v, v\right)  \le \Lambda_n\, |v|^2\, |z|^{p-2}
\eeq
for $0<\lambda_n\le \Lambda_n$ and corresponding $u_n\in C_{J_n}^+$ with
\[
 J_n(v)=\int_\Omega\frac{1}{p}\,  H_n(Dv)-F(v)\, dx
\]
such that $u_n\rightharpoonup u$ in $W^{1, p}_0(\Omega)$.
\end{proposition}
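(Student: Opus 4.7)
The plan is to build the approximants $H_n$ from smoothings of the $1$-homogeneous gauge $\Phi := H^{1/p}$ via Lemma \ref{lemmaquad}, produce $u_n$ through the Brezis--Oswald-type criterion of Proposition \ref{proBO}, and identify the weak limit with $u$ using the uniqueness statement of Proposition \ref{prouniqueness}.

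For the construction of $H_n$, I would first set $\Phi := H^{1/p}$, which is continuous, convex, positively $1$-homogeneous and vanishes only at $0$. Picking a standard mollifier $\rho_{1/n}$, I would define $G_n(z) := (\Phi \ast \rho_{1/n})(z) + \tfrac{1}{n}|z|^2$; each $G_n$ is smooth and strongly convex with $D^2 G_n \ge (2/n)\,\mathrm{Id}$, and $G_n \to \Phi$ locally uniformly. Lemma \ref{lemmaquad} applied to $G_n$ produces the Minkowski functional $\Phi_n$ of $\{G_n \le 1\}$, which is $C^\infty(\R^N \setminus \{0\})$, positively $1$-homogeneous, and satisfies the tangential strict convexity \eqref{strictell}. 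Setting $H_n := \Phi_n^p$, the function $H_n$ is convex, positively $p$-homogeneous and belongs to $C^1(\R^N) \cap C^\infty(\R^N \setminus \{0\})$. The bound \eqref{stimeH} follows from the identity $D^2 H_n = p(p-1)\Phi_n^{p-2}\,D\Phi_n \otimes D\Phi_n + p\Phi_n^{p-1}\,D^2\Phi_n$: at $z$ with $\Phi_n(z) = 1$, writing $v = tz + w$ with $(D\Phi_n(z), w) = 0$ and using $D^2 \Phi_n(z)z = 0$ (Euler) together with \eqref{strictell} gives $(D^2 H_n(z)v, v) \ge p(p-1)t^2 + p\widehat{\lambda}_n |w|^2$, which is positive definite; the full bound \eqref{stimeH} then follows by positive $(p-2)$-homogeneity of $D^2 H_n$. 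Hausdorff convergence $\{G_n \le 1\} \to \{\Phi \le 1\}$ yields $\Phi_n \to \Phi$ uniformly on compacta, hence $|H_n(z) - H(z)| \le \delta_n |z|^p$ with $\delta_n \to 0$, and in particular a uniform two-sided bound $C^{-1}|z|^p \le H_n(z) \le C|z|^p$ for $n$ large.

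For the production of $u_n$ and the passage to the limit, since $u$ is not a first positive Dirichlet eigenfunction, Proposition \ref{proBO} forces
\[
\mu_\infty := \lim_{t \to +\infty} \frac{f(t)}{t^{p-1}} < \lambda_{1, H}^+(\Omega) < \lim_{t \to 0^+} \frac{f(t)}{t^{p-1}} =: \mu_0.
\]
The uniform convergence of $H_n$ to $H$ on the Euclidean unit sphere forces $\lambda_{1, H_n}^+(\Omega) \to \lambda_{1, H}^+(\Omega)$, so $\mu_\infty < \lambda_{1, H_n}^+(\Omega) < \mu_0$ for large $n$, and Proposition \ref{proBO} applied to $J_n$ delivers some $u_n \in C_{J_n}^+ \setminus \{0\}$. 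The uniform coercivity of $J_n$ inherited from $H_n \ge c|\cdot|^p$ gives a uniform $W^{1,p}_0(\Omega)$ bound on $u_n$, so extracting a weakly convergent subsequence $u_n \rightharpoonup \bar u$, strongly in $L^q(\Omega)$ for $q < p^*$, minimality of $u_n$ against $u$ in $J_n$, the pointwise closeness $|H_n - H| \le \delta_n |\cdot|^p$, weak lower semicontinuity of $v \mapsto \int_\Omega H(Dv)\,dx$, and dominated convergence for $J_n(u) \to J(u)$ together yield
\[
J(\bar u) \le \liminf_n J_n(u_n) \le \limsup_n J_n(u_n) \le \lim_n J_n(u) = J(u),
\]
so $\bar u$ is a non-negative minimiser of $J$; moreover $\bar u \ne 0$ because $J(u) < 0$ (tested against a small multiple of a first positive eigenfunction, as in the proof of Proposition \ref{proBO}). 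Proposition \ref{prouniqueness} ensures $u$ is the unique positive energy critical point of $J$, hence $\bar u = u$, and uniqueness of the subsequential limit promotes the convergence to the full sequence. The main obstacle lies in the construction step: the smoothing must simultaneously preserve convexity, positive $p$-homogeneity and deliver the polynomial ellipticity \eqref{stimeH}, and passing through the $1$-homogeneous gauge built by Lemma \ref{lemmaquad} is precisely the device that reconciles these three requirements.
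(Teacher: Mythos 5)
Your proposal is correct and follows essentially the same route as the paper: mollify-plus-quadratic-correction to build a strongly convex smooth gauge, take its Minkowski functional, raise to the $p$-th power, invoke Proposition~\ref{proBO} for existence of $u_n$, derive a uniform coercivity bound, and close by the uniqueness statement of Proposition~\ref{prouniqueness}. The minor variations — mollifying $\Phi = H^{1/p}$ instead of $H$ itself, deriving \eqref{stimeH} by hand from the Euler relation $D^2\Phi_n(z)z=0$ and \eqref{strictell} rather than citing \cite[Prop.~3.1]{CoFaVa2}, and spelling out the eigenvalue convergence $\lambda_{1,H_n}^+(\Omega)\to\lambda_{1,H}^+(\Omega)$ that the paper leaves implicit — do not change the structure and are all sound.
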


\begin{proof}

 Fix an even $\varphi\in C^\infty_c(B_1; [0, \infty[)$ such that $\|\varphi\|_1=1$ and set, for a sequence $\eps_n\downarrow 0$, $\varphi_n(z)=\eps_n^{-N}\varphi(z/\eps_n)$.
By Jensen inequality it holds $\varphi_n*H\ge H$ and for each $n$ the function $\varphi_n*H$ is smooth and convex. We can then set
\[
G_n(z)=\varphi_n*H(z)+\eps_n\, \frac{|z|^2}{2}
\]
so that each $G_n$ is convex as well and it holds
\beq
\label{appr0}
G_n\ge H, \qquad  G_n\to H \quad \text{in $C^0_{\rm loc}(\R^N)$}.
\eeq
Define the convex sets
\[
K_n=\{ z: G_n(z)\le 1\}\subseteq \{H\le 1\}=K.
\]
Thanks to \eqref{appr0} and \eqref{pcontrol} there exists $\delta>0$ such that for sufficiently large $n$ it holds
\beq
\label{Bdelta}
B_\delta(0)\subseteq K_n
\eeq
 so that for such $n$'s we can then let $\f_n$ be the
Minkowski functional of $K_n$
and similarly define $\f$ as the Minkowski functional of $K:=\{H\le 1\}$ (notice that then $H=\f^p$).
Being $K_n\subseteq K$ it holds $\f_n\ge \Phi$ and from \eqref{Bdelta} we infer
\beq
\label{ucontrol0}
\Phi_n(z) \le |z|/\delta
\eeq
for all sufficiently large $n$.
Given $z\ne 0$, from
\[
1=G_n(z/\Phi_n(z))
\]
and the local uniform convergence of $G_n$ to $H$, we infer that any limit point $\mu\in \R$ of the bounded sequence $(\Phi_n(z))$ fulfils $H(z/\mu)=1$, i.\,e.\,$\mu=\Phi(z)$. Therefore,  a sub-subsequence argument ensures that $\f_n\to \f$ point-wise and thus, being each $\f_n$,
as well as $\f$, convex and finite, locally uniformly on $\R^N$.
Finally set
\[
H_n(z)=\f_n^p(z),
\]
which, by \eqref{ucontrol0}, satisfies
\beq
\label{ucontrol}
H_n(z)\le \frac{|z|^p}{\delta^p}\qquad \forall z\in \R^N
\eeq
for any sufficiently large $n$.
By construction, each $H_n$ is smooth on $\R^N\setminus\{0\}$,
strictly convex and positively $p$-homogeneous.
Since  $D^2G_n\ge \eps_n\, {\rm Id}$, by Lemma \ref{lemmaquad} $\f_n$ has strongly convex unit ball in the sense that \eqref{strictell} holds true,
so that \eqref{stimeH} follows from \cite[Proposition 3.1]{CoFaVa2}.
Moreover, by the previous analysis of the sequence $\f_n$, it holds
\[
H_n\ge H, \qquad H_n\to H \quad \text{in $C^0_{\rm loc}(\R^N)$}
\]
so that in particular $J_n\ge J$.
By Proposition \ref{proBO}, \eqref{proBOeq} holds true,
hence $J_n$ has a nontrivial non-negative  energy critical point $u_n$.
By the variational characterisation \eqref{varchar} and Proposition \ref{prouniqueness},
$u_n$ is the unique minimiser for $J_n$ on $\big(W^{1,p}_0(\Omega)\big)_+$, hence in particular $J_n(u_n)\le 0$.
By \eqref{proBOeq}, for a suitable $\theta\in \ ]0, 1[$ there exists $L>0$ such that
\[
f(t)\le \theta\, \lambda_{1, H}^+(\Omega)t^{p-1}\qquad \text{for $t>L$},
\]
so that for $t>0$ it holds
\[
F(t)\le \theta\, \lambda_{1, H}^+(\Omega)\frac{t^p}{p}+ L\, \sup_{[0, L]}f.
\]
By $H_n\ge H$, the definition of $\lambda^+_{1, H}(\Omega)$ and \eqref{pcontrol}  we thus infer
\[
\begin{split}
0\ge J_n(u_n)&\ge
 J(u_n)\ge
 \frac{1}{p}\, \int_\Omega H(Du_n)\, dx-\frac{\theta\, \lambda_{1, H}^+(\Omega)}{p}\,\int_{\Omega} u_n^p\, dx- C\, |\Omega|\\
&\ge \frac{1-\theta}{p}\, \int_\Omega H(Du_n)\, dx-C\, |\Omega|\ge  \frac{1-\theta}{C\, p}\, \int_\Omega |Du_n|^p\, dx-C\, |\Omega|.
\end{split}
\]
 Therefore $(u_n)$ is bounded in $W^{1,p}_0(\Omega)$.
 Suppose, up to subsequences, that $u_n\rightharpoonup \bar u\in \big(W^{1,p}_0(\Omega)\big)_+$.
 From the lower semicontinuity of $J$, $J_n\ge J$ and the minimality of $u_n$ we get
\beq
\label{gammal}
J(\bar u)\le \varliminf_n J(u_n)\le \varliminf_n J_n(u_n)\le \varliminf_n J_n(u).
\eeq
Finally,  by  \eqref{ucontrol} and dominated convergence
\[
\lim_n \int_\Omega H_n(Du)\, dx=\int_\Omega H(Du)\, dx
\]
so that
\[
J(\bar u)\le \varliminf_n J_n(u)=J(u).
\]
Since by \eqref{varchar} $u$ is a minimiser for  $J$, so is $\bar u$,
and using again Proposition \ref{prouniqueness} grants $\bar u = u$.
\end{proof}

We also provide a similar approximation scheme for first Dirichlet positive eigenfunctions.

\begin{proposition}\label{approdue}
Let $\Omega\subseteq \R^N$ be open, connected and with finite measure and
$H:\R^N\to [0, +\infty[$ be strictly convex and positively $p$-homogeneous. If $u$ is a first positive Dirichlet eigenfunction for $\lambda_{1, H}^+(\Omega)$,
there exists a sequence of  convex, positively $p$-homogeneous  $H_n\in C^1(\R^N)\cap C^\infty(\R^N\setminus\{0\})$  obeying \eqref{stimeH} and corresponding first positive Dirichlet eigenfunctions $u_n$ such that
 $u_n\rightharpoonup u$ in $W^{1, p}_0(\Omega)$.
\end{proposition}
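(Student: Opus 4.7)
The strategy parallels the proof of Proposition \ref{approuno}, reusing verbatim the regularisation of $H$ but replacing the uniqueness of non-negative minimisers with the simplicity of $\lambda_{1,H}^+(\Omega)$ granted by Proposition \ref{prouniqueness}. First I would recycle the construction from Proposition \ref{approuno}: let $G_n=\varphi_n*H+\eps_n|z|^2/2$ with $\varphi_n$ a standard even mollifier, let $\Phi_n$ be the Minkowski functional of $\{G_n\le 1\}$, and set $H_n=\Phi_n^p$. Exactly as before, each $H_n$ is convex, positively $p$-homogeneous, belongs to $C^1(\R^N)\cap C^\infty(\R^N\setminus\{0\})$, satisfies \eqref{stimeH} (by Lemma \ref{lemmaquad} and \cite[Proposition 3.1]{CoFaVa2}), and obeys the uniform sandwich $H\le H_n\le C\,|z|^p$ for all large $n$, with $H_n\to H$ in $C^0_{\rm loc}(\R^N)$.

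Normalise the given $u$ so that $\|u\|_p=1$. For every sufficiently large $n$ the direct method (using $H_n\ge H\ge|\cdot|^p/C$, convexity and positive $p$-homogeneity of $H_n$, and the compact embedding $W^{1,p}_0(\Omega)\hookrightarrow L^p(\Omega)$) provides a non-negative minimiser $u_n$ of the Rayleigh quotient defining $\lambda_{1,H_n}^+(\Omega)$, with $\|u_n\|_p=1$. Using $u$ itself as test and dominated convergence (the bound $H_n(Du)\le C|Du|^p\in L^1$ gives $\int_\Omega H_n(Du)\to\int_\Omega H(Du)$) yields
\[
\limsup_n \lambda_{1,H_n}^+(\Omega)\le\lambda_{1,H}^+(\Omega),
\]
while the trivial pointwise bound $H_n\ge H$ gives the reverse inequality. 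Hence $\lambda_{1,H_n}^+(\Omega)\to\lambda_{1,H}^+(\Omega)$.

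From $\int_\Omega|Du_n|^p\le C\int_\Omega H_n(Du_n)=C\lambda_{1,H_n}^+(\Omega)\le C'$, the sequence $(u_n)$ is bounded in $W^{1,p}_0(\Omega)$, so up to a subsequence $u_n\rightharpoonup\bar u$ in $W^{1,p}_0(\Omega)$ and strongly in $L^p(\Omega)$; thus $\bar u\ge 0$ and $\|\bar u\|_p=1$. By weak lower semicontinuity and $H\le H_n$,
\[
\int_\Omega H(D\bar u)\,dx\le\varliminf_n\int_\Omega H(Du_n)\,dx\le\varliminf_n\int_\Omega H_n(Du_n)\,dx=\lim_n\lambda_{1,H_n}^+(\Omega)=\lambda_{1,H}^+(\Omega),
\]
and the reverse inequality holds by definition of $\lambda_{1,H}^+(\Omega)$. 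Therefore $\bar u$ is a non-negative, normalised first positive Dirichlet eigenfunction of $H$. Since $H$ is strictly convex, Proposition \ref{prouniqueness} gives simplicity of $\lambda_{1,H}^+(\Omega)$, forcing $\bar u=u$. As the limit is uniquely determined, the full sequence converges weakly to $u$.

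\textbf{Main difficulty.} The delicate point is the identification $\bar u=u$: the energies $\int_\Omega H(Du_n)$ need not converge to $\int_\Omega H(Du)$ (because the $u_n$ are tuned to the wrong energy $H_n$), so one cannot argue via $\Gamma$-convergence of the energies directly. The chain of inequalities above circumvents this by combining the monotonicity $H_n\ge H$ with the already-established convergence of the eigenvalues, which is exactly the input needed to squeeze $\int_\Omega H(D\bar u)$ to $\lambda_{1,H}^+(\Omega)$. Without the simplicity granted by strict convexity of $H$ one could only conclude convergence to some eigenfunction, not to the prescribed $u$.
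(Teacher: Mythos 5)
Your argument is correct and follows essentially the same route as the paper: reuse the $H_n$ from Proposition~\ref{approuno}, extract a bounded sequence of normalised first eigenfunctions $u_n$, identify the weak limit $\bar u$ as a first eigenfunction via lower semicontinuity and the monotonicity $H_n\ge H$, and conclude $\bar u=u$ from the simplicity of $\lambda_{1,H}^+(\Omega)$. The only cosmetic difference is that you obtain the $W^{1,p}$ bound by testing with $u$ itself and invoking dominated convergence through the uniform bound $H_n\le C|\cdot|^p$, whereas the paper tests with a fixed $v\in C^\infty_c(\Omega)$ so that $H_n(Dv)\to H(Dv)$ uniformly; both are valid, and the chain of inequalities you write out explicitly is exactly what the paper means by ``the chain of inequalities \eqref{gammal} still holds true''.
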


\begin{proof}
By Proposition \ref{prouniqueness} the strict convexity of $H$ ensures that the eigenvalue $\lambda_{1, H}^+(\Omega)$ is simple, so we can normalise and suppose that $\|u\|_p=1$. For $H_n:\R^N\to \R$ defined as in the previous proof, consider the first positive Dirichlet eigenfunctions $u_n$ associated to $H_n$, normalised with unitary $L^p$ norm. Fix $v\in C^\infty_c(\Omega)$ such that $\|v\|_p=1$ and recall that $H_n\to H$ in $C^0_{\rm loc}(\R^N)$, hence
\[
\lim_n\int_\Omega H_n(Dv)\, dx=\int_\Omega H(Dv)\, dx
\]
Since by the definition of $u_n$ it holds
\[
\int_\Omega H_n(Dv)\, dx\ge \int_\Omega H_n(Du_n)\, dx\ge \frac{1}{C}\, \int_\Omega |Du_n|^p\, dx
\]
hence $(u_n)$ is bounded in $W^{1,p}_0(\Omega)$ and we can suppose that $u_n\rightharpoonup \bar u$ and $\|\bar u\|_p=1$ by the compactness of $W^{1,p}_0(\Omega)\hookrightarrow L^p(\Omega)$. The chain of inequalities \eqref{gammal} still holds true, proving that $\bar u$ is a first positive Dirichlet eigenfunction and then that $\bar u=u$ by the simplicity of $\lambda_{1, H}^+(\Omega)$ stated in Proposition \ref{prouniqueness}.
\end{proof}

In \cite{CaRiSc} is proved an Hopf Lemma for a class of anisotropic
operators, where the function giving the anisotropy is even.
In the sequel we extend the result to the case of a not even anisotropy.

\subsection{Hopf Lemma}\label{SHL}

Let $\f\in C^\infty(\R^N\setminus\{0\})$ be convex, positively $1$-homogeneous and such that
\beq
\label{proM}
\{\f\le 1\} \qquad \text{is bounded and strongly convex}.
\eeq
The {\em polar} of $\f$ is
\[
\f^\circ(z)=\sup\left\{(\xi, z): \f(\xi)\le 1\right\}
\]
and $\f^\circ$ has the same regularity of $\f$ and still fulfils \eqref{proM}
(see for instance \cite{Ro}).

Moreover, for any $x\ne 0$ it holds \cite{BePa}:
\begin{equation}\label{propr finsler1}
\f(D\f^\circ(x))\equiv 1\equiv \f^\circ(D\f(x))\,,
\end{equation}
\begin{equation}\label{propr finsler2}
\f(x)\, D\f^\circ(D\f (x))=x=\f^\circ(x)\, D\f(D \f^\circ (x))\,.
\end{equation}

\begin{lemma}
For $\f$ as given and $r>0$ set
\beq
\label{defAr}
A_r=\{x\in \R^N: r>\check \f^\circ(x)>r/2\},
\eeq
where
\[
\check \f^\circ(x) =\f^\circ(-x).
\]
For any $m>0$ there exists $\underline{u}\in C^2(\overline{A_r})$ such that
\[
\begin{cases}
{\rm div}\, (\f^{p-1}(D\underline{u})\, D\f(D\underline{u}))=0&\text{ in $A_r$}\\
\underline{u}=m&\text{on $\{\check \f^\circ=r/2\}$}\\
\underline{u}=0, \quad \partial_n \underline{u}>0&\text{on $\{\check \f^\circ=r\}$}
\end{cases}
\]
where $n$ is the inner normal to $\{\check \f^\circ\le r\}$.
\end{lemma}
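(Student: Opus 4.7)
\ My plan is to exhibit an explicit ``radial'' solution via the ansatz
\[
\underline{u}(x)=\psi\bigl(\check\f^\circ(x)\bigr),
\]
where $\psi:[r/2,r]\to [0,m]$ is smooth and strictly decreasing. The choice of $\check\f^\circ$ in place of $\f^\circ$ is dictated by the lack of evenness of $\f$: with $\psi'<0$ one gets $D\underline{u}(x)=\psi'(\check\f^\circ(x))\,D\check\f^\circ(x)=-\psi'(\check\f^\circ(x))\,D\f^\circ(-x)$, and a \emph{positive} multiple of $D\f^\circ(-x)$ is exactly the kind of vector on which $\f$ behaves well by \eqref{propr finsler1}.

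\textbf{Reduction to an ODE.}\ Setting $\lambda:=-\psi'(\check\f^\circ(x))>0$, the positive $1$-homogeneity of $\f$ together with \eqref{propr finsler1} gives
\[
\f(D\underline{u})=\lambda\,\f\bigl(D\f^\circ(-x)\bigr)=\lambda=-\psi'(\check\f^\circ(x)).
\]
The positive $0$-homogeneity of $D\f$ combined with \eqref{propr finsler2} applied at $-x$ produces
\[
D\f(D\underline{u})=D\f\bigl(D\f^\circ(-x)\bigr)=-\frac{x}{\check\f^\circ(x)}.
\]
Combining, with $t:=\check\f^\circ(x)$,
\[
\f^{p-1}(D\underline{u})\,D\f(D\underline{u})=-\frac{(-\psi'(t))^{p-1}}{t}\,x.
\]
Taking the divergence and using the Euler identity $(D\check\f^\circ(x),x)=\check\f^\circ(x)$, valid because $\check\f^\circ$ is positively $1$-homogeneous, the PDE collapses to the scalar ODE
\[
\bigl(t^{N}h(t)\bigr)'=0,\qquad h(t):=\frac{(-\psi'(t))^{p-1}}{t},
\]
so that $|\psi'(t)|=k\,t^{-(N-1)/(p-1)}$ for some constant $k>0$ to be determined.

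\textbf{Solving the ODE and verification.}\ Integration gives the two-parameter family
\[
\psi(t)=\begin{cases}\,-\dfrac{k}{\beta}\,t^{\beta}+B,\qquad \beta:=\dfrac{p-N}{p-1},&\text{if }p\neq N,\\[1.2ex] \,-k\log t+B,&\text{if }p=N,\end{cases}
\]
and the two boundary conditions $\psi(r)=0$, $\psi(r/2)=m$ uniquely determine $k>0$ and $B$ (a direct case analysis in the sign of $\beta$ confirms $k>0$). The resulting $\psi$ is $C^\infty$ and strictly decreasing on $[r/2,r]$; since $\overline{A_r}\subset \R^N\setminus\{0\}$ and $\check\f^\circ\in C^\infty(\R^N\setminus\{0\})$, we get $\underline{u}\in C^\infty(\overline{A_r})$. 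Finally, on $\{\check\f^\circ=r\}$ the inner normal is $n=D\f^\circ(-x)/|D\f^\circ(-x)|$, hence
\[
\partial_n\underline{u}=(D\underline{u},n)=-\psi'(r)\,|D\f^\circ(-x)|>0,
\]
using $\psi'(r)=-k\,r^{-(N-1)/(p-1)}<0$ and $D\f^\circ\neq 0$ off the origin.

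\textbf{Main obstacle.}\ There is no substantial difficulty in this lemma beyond bookkeeping: it amounts to writing down the fundamental solution of the anisotropic $p$-Laplacian ``centred at $0$'' with respect to $\check\f^\circ$. The only conceptual point is to use $\check\f^\circ$ instead of $\f^\circ$ as the radial variable, which, as highlighted above, is exactly what makes $\f(D\underline{u})$ computable by positive homogeneity in the absence of evenness.
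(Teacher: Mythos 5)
Your proposal is correct and follows essentially the same route as the paper's proof: the same ansatz $\underline{u}=\psi(\check\f^\circ)$, the same use of \eqref{propr finsler1}--\eqref{propr finsler2} and positive homogeneity to evaluate $\f(D\underline u)$ and $D\f(D\underline u)$, the same reduction via Euler's identity to the ODE $\bigl(t^{N-1}(-\psi')^{p-1}\bigr)'=0$, and the same explicit power/log family with boundary conditions fixing the constants and yielding $\partial_n\underline u>0$. The only cosmetic difference is that you phrase the conserved quantity as $t^N h(t)$ with $h=(-\psi')^{p-1}/t$, which is the same expression as in the paper once expanded.
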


\begin{proof}
We choose $\underline{u}(x)=w(\check \f^\circ(x))$ for a suitable, decreasing $w\in C^{2}([r/2, r])$.
Since $D\check \f^\circ(x)=-D\f^\circ(-x)$, we have, for $x\ne 0$ (which can be assumed henceforth as $0\notin A_r$)
\beq
\label{DcHc}
D\underline{u}(x)=w'(\check \f^\circ(x))\, D\check \f^\circ(x)=-w'(\check \f^\circ(x))\, D\f^\circ(-x).
\eeq
Therefore,  as we are assuming $w'<0$, \eqref{propr finsler1} and \eqref{propr finsler2}
and the $0$-positive homogeneity of $D\check \f^\circ$ give
\[
\f(D\underline{u}(x))=-w'(\check \f^\circ(x))\, \f(D\f^\circ(-x))=-w'(\check \f^\circ(x)).
\]
\[
D\f(D\underline{u}(x))=D\f(D\f^\circ(-x))=\frac{-x}{\f^\circ(-x)}=-\frac{x}{\check \f^\circ(x)}
\]
so that
\[
\begin{split}
-{\rm div}\, \big(\f^{p-1}(D\underline{u}(x))\, D\f(D\underline{u}(x))\big)&=
 {\rm div}\, \left(x\, \frac{(-w'(\check \f^\circ(x))^{p-1}}{\check \f^\circ(x)}\right)\\
&=\left(x, D\frac{(-w'(\check \f^\circ(x))^{p-1}}{\check \f^\circ(x)}\right)+
N\frac{(-w'(\check \f^\circ(x))^{p-1}}{\check \f^\circ(x)}
\end{split}
\]
We compute
\[
\begin{split}
D\frac{(-w'(\check \f^\circ(x))^{p-1}}{\check \f^\circ(x)}=
 &-\frac{(p-1)\, (-w'(\check \f^\circ(x))^{p-2}\, w''(\check \f^\circ(x))\, D\check \f^\circ(x)}{\check \f^\circ(x)}\\
&\quad -\frac{(-w'(\check \f^\circ(x))^{p-1}\, D\check \f^\circ(x)}{(\check \f^\circ(x))^2}
\end{split}
\]
and note that, by the $1$-positive homogeneity of $\check \f^\circ$
\[
\left(x, D\check \f^\circ(x)\right )=\check \f^\circ(x),
\]
 hence
\[
{\rm div}\, \big(\f^{p-1}(D\underline{u})\, D\f(D\underline{u})\big)=
(p-1)\, (-w'(\check \f^\circ)^{p-2}\, w''(\check \f^\circ) +(N-1)\, \frac{(-w'(\check \f^\circ)^{p-1}}{\check \f^\circ}.
\]
It thus suffices to choose a decreasing $w$ so that
\[
(p-1)\, (-w'(t))^{p-2}\, w''(t) +(N-1)\, \frac{(-w'(t)^{p-1}}{t}=0
\]
which is equivalent to
\[
\left( (-w'(t))^{p-1}\, t^{N-1}\right)'=0.
\]
All the strictly decreasing solutions on $\R_+$ of the latter ODE are given by
\[
w(t)=
\begin{cases}
\frac{A}{p-N}\, t^{(p-N)/(p-1)}+B&\text{if $p\ne N$}\\
A\, \log t+B&\text{if $p=N$}
\end{cases}
\]
for  arbitrary $A<0, B\in \R$, and if $m>0$ it is readily verified that we can indeed choose $A<0$, $B\in \R$ in such a way  that
\[
w(r/2)=m, \qquad w(r)=0
\]
and therefore also $w'<0$.
For such a choice, the corresponding $\underline{u}=w(\check \f^\circ)$
fulfils all the requirements, as the interior normal to $\{\check \f^\circ\le r\}$ is $-D\check \f^\circ/|D\check \f^\circ|$
and by \eqref{DcHc}
\[
\partial_n \underline{u}=
-\left(\frac{D\check \f^\circ}{|D\check \f^\circ|}, w'(\check \f^\circ) \, D\check \f^\circ\right)=
-w'(\check \f^\circ) \, |D\check \f^\circ|>0
\]
\end{proof}

\begin{proposition}
Suppose $\Omega$ is bounded and connected with $C^{2}$ boundary,
$H\in C^1(\R^N)\cap C^\infty(\R^N\setminus\{0\})$ is positively $p$-homogeneous and  fulfils \eqref{stimeH},
and $f\in C^0(\R, [0, +\infty[)$ obeys \eqref{growthf}.
Then any critical point $u$  for $J$ is $C^{1,\alpha}(\overline{\Omega})$, $u>0$ in $\Omega$ and
\beq
\label{hopf}
\frac{\partial u}{\partial n}>0 \qquad \text{on $\partial\Omega$}
\eeq
where $n$ is the interior normal to $\partial\Omega$.
\end{proposition}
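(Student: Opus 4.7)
The plan is to establish, in order, higher regularity and positivity of $u$, a uniform interior Wulff-shape condition at $\partial\Om$, and the Hopf estimate via a barrier argument plus the comparison principle of Proposition \ref{comparison}.

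First I show $u\in C^{1,\alpha}(\overline\Om)$ and $u>0$ in $\Om$. By \cite[Theorems 3.2 and 3.5]{M} already cited in Section \ref{SECP}, any energy critical point belongs to $C^\alpha(\overline\Om)$ and, on each connected component of $\Om$, either vanishes identically or is strictly positive; only the latter case is of interest. The minimality of $u$ for the semilinearised functional $J_u$ in \eqref{semil} yields the weak equation $-\Div(DH(Du))=f(u)$ with $f(u)\in L^\infty(\Om)$, so that the smoothness of $H$ away from the origin together with the uniform $p$-Laplace ellipticity \eqref{stimeH} and $\partial\Om\in C^2$ allow standard elliptic regularity theory to upgrade the H\"older bound to $u\in C^{1,\alpha}(\overline\Om)$.

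Next I set up the anisotropic barrier geometry. Let $\f$ be the Minkowski functional of $\{H\le 1\}$, so that $H=\f^p$, $\f\in C^\infty(\R^N\setminus\{0\})$, and $\{\f\le 1\}$ is strongly convex (as in \cite[Proposition 3.1]{CoFaVa2}, exploited in the proof of Proposition \ref{approuno}); the polar $\f^\circ$, and hence $\check{\f}^\circ$, inherit these properties. Fix $x_0\in\partial\Om$ with inner Euclidean normal $n$. The smooth Gauss-type map $z\mapsto D\check{\f}^\circ(z)/|D\check{\f}^\circ(z)|$ is a diffeomorphism of $\{\check{\f}^\circ=1\}$ onto the Euclidean unit sphere, so there exists a unique $\nu_0$ with $\check{\f}^\circ(\nu_0)=1$ and $D\check{\f}^\circ(\nu_0)\parallel -n$; for $r>0$ set $y=x_0-r\,\nu_0$, which produces a Wulff shape $W_r(y)=\{x:\check{\f}^\circ(x-y)\le r\}$ tangent to $\partial\Om$ at $x_0$ with matching outward normals. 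Using the $C^2$-regularity and uniform interior ball condition of $\partial\Om$ and the bounded principal curvatures of the strongly convex $\{\check{\f}^\circ=1\}$, there is $r_0>0$ such that for all $r\in\ ]0,r_0]$ and all $x_0\in\partial\Om$ one has $W_r(y)\subseteq\overline\Om$ and $W_r(y)\cap\partial\Om=\{x_0\}$.

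The final step is to apply the previous lemma on the annulus $A_r(y)=\{r/2<\check{\f}^\circ(\cdot-y)<r\}$ with inner Dirichlet datum $m=\min_{\{\check{\f}^\circ(\cdot-y)=r/2\}} u$, which is strictly positive thanks to $u>0$ on the compact set $\{\check{\f}^\circ(\cdot-y)=r/2\}\subset\Om$. This produces $\underline u\in C^2(\overline{A_r(y)})$ solving $\Div(\f^{p-1}(D\underline u)D\f(D\underline u))=0$ with $\underline u=m$ on the inner boundary, $\underline u=0$ on the outer boundary, and $\partial_n\underline u(x_0)>0$. Being a classical critical point of the convex functional $v\mapsto \int_{A_r(y)}H(Dv)\,dx$, $\underline u$ minimises it on its own Dirichlet class. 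The boundary inequality $\underline u\le u$ on $\partial A_r(y)$ is immediate (by the choice of $m$ on the inner surface and by $u\ge 0=\underline u$ on the outer one), so $(\underline u-u)_+\in W^{1,p}_0(A_r(y))$ and Proposition \ref{comparison} yields $u\ge \underline u$ throughout $A_r(y)$. Since $u,\underline u\in C^1$ near $x_0$ and $u(x_0)=\underline u(x_0)=0$, one concludes $\partial_n u(x_0)\ge \partial_n\underline u(x_0)>0$, proving \eqref{hopf}. The main obstacle is the uniform interior Wulff-shape construction in the second step: classical Euclidean interior-ball techniques provide tangent balls, but here one needs tangent translates of the anisotropic shape $\{\check{\f}^\circ\le 1\}$, requiring simultaneously the $C^2$-regularity of $\partial\Om$ and the smoothness plus strong convexity of this shape, the latter being a nontrivial consequence of \eqref{stimeH}. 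Once this geometric input is in hand, the barrier supplied by the preceding lemma together with the weak comparison principle close the argument in a mechanical fashion.
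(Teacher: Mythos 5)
Your proposal is correct and follows essentially the same route as the paper: interior $C^{1,\alpha}$ and positivity from \cite{M} and \cite{Lie}, construction of a tangent Wulff shape $\{\check\f^\circ(\cdot-x_1)\le r\}$ inside $\Omega$ touching $\partial\Omega$ only at $x_0$, the explicit radial barrier $\underline u$ on the annulus $A_r$ from the preceding lemma, and comparison via Proposition \ref{comparison}. You helpfully spell out two steps the paper leaves implicit — the Gauss-map argument for the existence of the tangent Wulff shape (requiring smoothness and strong convexity of $\{\check\f^\circ\le 1\}$, which indeed follows from \eqref{stimeH}), and the observation that $\underline u$, as a classical critical point of the convex functional $v\mapsto\int_{A_r}H(Dv)\,dx$, is a minimiser of it, which is exactly what Proposition \ref{comparison} asks for.
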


\begin{proof}
Any critical point for $J$ (which, under the stated assumption is a $C^1$ functional) is a weak solution of \eqref{eq iniziale} with non-negative and subcritical left hand side.
The boundedness and positivity of $u$ has already been discussed and
its regularity up to the boundary follows from \eqref{stimeH} and \cite{Lie}.
Let $\f$ be the Minkowski functional of $\{H\le 1\}$, so that $H=\f^p$,
and let $A_r$ be given in \eqref{defAr}.
Since $\partial\Omega$ is $C^2$ and $\check \f^\circ\in C^2(\R^N\setminus\{0\})$,
for any $x_0\in \partial\Omega$ there exists $x_1\in \Omega$ and $r>0$ such that
\[
x_1+A_r\subseteq \Omega, \qquad \left(x_1+\overline{A_r}\right)\cap \partial\Omega=\{x_0\}.
\]
Let
\[
m=\inf\{u(x):  \check \f^\circ(x-x_1)=r/2\}>0
\]
and choose the corresponding $\underline{u}$ given in the previous Lemma.
The weak comparison principle in Proposition \ref{comparison}
ensures that $u\ge \underline{u}$, which in turn implies \eqref{hopf}.
\end{proof}

\section{Proof of the main result}

$\bullet$ {\em Step 1}

Given $u\in C_J^+$, by Lemma \ref{lemmaCj} we can assume that $F> 0$ on $]0, +\infty[$. Consider the sequence $(u_n)$ given in Propositions \ref{approuno} and \ref{approdue}, solving the corresponding regularised problems.  Suppose we can prove that $c_{\varphi(u_n)}\le 0$. Since $u_n\to u$ almost everywhere in $\Omega$ , so does $\varphi(u_n)$ to $\varphi(u)$, hence being $u$ (and thus $\varphi(u)$) continuous in $\Omega$, we will have $c_{\varphi(u)}\le 0$.

Therefore we can assume that $H\in C^1(\R^N)\cap C^\infty(\R^N\setminus\{0\})$ fulfils \eqref{stimeH2} for some given $0<\lambda\le \Lambda$ (and is therefore strictly convex).
Moreover, proceeding as in \cite[Section 4.1]{BoMoSq1} (with obvious modifications in the case $u$ is a first positive Dirichlet eigenfunction), we can assume that $\Omega$ is strongly convex with smooth boundary. In particular, the strong minimum principle and the Hopf Lemma   apply, so we can choose $\eta>0$, $\beta\in \ ]0, 1[$ such that $u\in   C^{1,\beta}(\overline{\Omega})$ and furthermore
\beq
\label{etaunodue}
 \inf_{\Omega_{\eta}} u>0,\qquad\frac{\partial u}{\partial n}>0 \text{ on $\partial\Omega$},\qquad   \inf_{\Omega\setminus \Omega_{\eta}} |Du|>0.
\eeq
We aim at proving that for any given $\delta \in \ ]0, \eta[$ (which we'll assume henceforth),
\beq
\label{claimfinale}
c_{\varphi(u)}\le 0 \qquad \text{in $\Omega_{\delta/2}\times\Omega_{\delta/2}\times [0, 1]$},
\eeq
(see \eqref{defsdelta} for $\Omega_\delta$), which will prove the theorem. In doing so, we can also assume that $\delta$ is so small that $\partial\Omega_{\delta/2}$ is strongly convex and smooth.

$\bullet$ {\em Step 2}

Denoting  $a^{2/p}=(a^2)^{1/p}$ for any $a\in \R$, we define the family of integrands (recall that $F$ is oddly extended to $\R$)
\[
G_\eps(t, z)= \frac{1}{p}\left[\eps\, F(t)^{\frac{2}{p}}+H^{\frac{2}{p}}(z)\right]^{\frac{p}{2}} -F(t)
\]
and corresponding auxiliary functionals
\begin{equation}\label{afunc}
I_\eps(u)=
\frac 1 p\io
\left[
\eps\,  F(u)^{\frac{2}{ p}}+H^{\frac{2}{p}}(D u)
\right]^{\frac p 2}dx
-\io F(u)dx
\end{equation}
If $u$ is not a first positive Dirichlet eigenfunction,  by \cite[Lemma 4.1]{BoMoSq1}, for any sufficiently small $\eps$ problem
\[
\inf \left\{ I_\eps(u): u\in W^{1,p}_0(\Omega)\right\}
\]
admits a minimiser $u_\eps$ with the property that
\beq
\label{cbeta}
u_\eps\to u\qquad \text{in $C^{\beta}(\overline{\Omega})$}.
\eeq
Indeed, the proof of \cite[Lemma 4.1]{BoMoSq1} can be repeated {\em verbatim} when $u$ is not a first positive Dirichlet eigenfunction, since only the uniqueness of the minimiser $u$ of $J$ is used therein and the latter is granted by the strict convexity of $H$ (due to the previous point) and Lemma \ref{lemmaCj}, \eqref{varchar} and Proposition \ref{prouniqueness}. When $u$ is a normalised  first positive Dirichlet eigenfunction we instead consider
\[
\inf \left\{ I_\eps(u): u\in W^{1,p}_0(\Omega), u\ge 0, \|u\|_p=1\right\}
\]
and proceed in the same way, using again Proposition \ref{prouniqueness} to ensure weak convergence of $u_\eps$ to $u$, as well as \eqref{cbeta} by uniform $C^{\beta}(\overline{\Omega})$ bounds.

$\bullet$ {\em Step 3}

We now improve the convergence of $u_\eps\to u$ beyond the $C^\beta$ level.
Any  $u_\eps$ defined above satisfies weakly the Euler-Lagrange equation for $I_\eps$,
\[
-{\rm div}\, (D_z G_\eps(u_\eps, Du_\eps))+\partial_u G_\eps(u_\eps, Du_\eps)=0
\]
which is more explicitly computed as
\beq\label{eappro}
\begin{split}
-\Div&\left(
\big(
\eps F(u_\eps)^{\frac 2 p}+
H^{\frac{2}{p}}(\n u_\eps)
\big)^{\frac{p-2}{2}}\n \frac{H^\frac{2}{p}}{2}(\n u_\eps)
\right)\\
&\quad = f(u_\eps)\left[1-
\frac \eps  p\, \big(
\eps \, F(u_\eps)^{\frac 2 p}+
H^\frac{2}{p}(\n u_\eps)
\big)^{\frac{p-2}{2}}
F(u_\eps)^{\frac{2-p}{p}}\right].
\end{split}
\eeq

Note that \eqref{cbeta} and the positivity of $u$ ensure that given any $\delta\in \ ]0, \eta[$, there exists $C>0$ such that for a sufficiently small $\eps$ (which will be assumed henceforth) it holds
\beq
\label{lbound}
\frac{1}{C}\le u_\eps\le C\qquad \text{ in $\Omega_{\delta/4}$}
\eeq
 and therefore, $F(u_\eps)$ is uniformly bounded from above and below by a positive constant.
 Thanks to Lemma \ref{lemmastell}, point 2, Tolskdorff local regularity theory \cite{To} applies , ensuring that
 \beq
 \label{c1est}
 \|u_\eps\|_{C^{1, \beta}(\overline{\Omega_{\delta/3}})}\le C
 \eeq
 for a $\beta\in\  ]0, 1[$  (possibly different from the one  in \eqref{cbeta}) depending on $\Omega, H, N, p$ and $C>0$ with the same dependencies and additionally on $\delta$ and $\|u\|_{L^\infty(\Omega)}$, but none of them depending on $\eps$. We'll assume henceforth that $\beta<\alpha$, the H\"older continuity coefficient of $f$. In particular  It follows that $u_\eps\to u$ in $C^1(\overline{\Omega_{\delta/3}})$ by Ascoli-Arzel\'a. Further regularity can be obtained noting that by \eqref{lbound} and the boundedness of $H(Du_\eps)$, the matrix $D^2_zG_\eps$ is strongly elliptic in $\Omega_{\delta/4}$ thanks to Lemma \ref{lemmastell}, point 2.  The difference quotient method yields $u_\eps\in W^{2,2}_{\rm loc}(\Omega_{\delta/4})$, with any partial derivative $w=\partial_iu_\eps$, $i=1, \dots, N$ obeying
 \[
 -{\rm div}\, \left(D_z^2 G_\eps(u_\eps, Du_\eps)\, Dw\right)={\rm div}\, \left(\partial_u D_zG_\eps(u_\eps, Du_\eps)\, w+e_i\, \partial_u G_\eps(u_\eps, Du_\eps)\right)
 \]
 weakly in $\Omega_{\delta/4}$.
 Using the $C^{1,\beta}$ regularity of $u_\eps$, we see that
 \[
 D_z^2 G_\eps(u_\eps, Du_\eps)\qquad \text{and}\qquad \partial_u D_zG_\eps(u_\eps, Du_\eps)\, w+e_i\, \partial_u G_\eps(u_\eps, Du_\eps)
 \]
 are $\beta$-H\"older continuous in $\Omega_{\delta/3}$, so that local linear regularity theory ensures $w\in C^{1, \beta}(\Omega_{\delta/3})$, i.\,e.\,$u_\eps\in C^{2,\beta}(\Omega_{\delta/3})$.
 As $\eps\to 0$, the $C^{2, \beta}(\Omega_{\delta/3})$ norm of $u_\eps$ may blow up, but by \eqref{etaunodue}, for sufficiently small $\eps$  it holds
 \[
 \inf_{\Omega\setminus\Omega_\eta}|Du_\eps|>0,
 \]
 therefore, by looking at \eqref{stell2}, we see that $D^2_zG_\eps$ is strongly elliptic in $\Omega\setminus\Omega_{\eta}$, with ellipticity constants uniformly bounded from below and above as $\eps\to 0$.  We conclude by local elliptic regularity theory that, given $\delta\in \ ]0, \eta[$, for any sufficiently small $\eps$ it holds
 \beq
 \label{c2est}
 \|u_\eps\|_{C^{2, \beta}(\overline{\Omega_{\delta/2}}\setminus \Omega_{\delta})}\le C
 \eeq
 with $C$ depending only on $\Omega, H, N, p, \delta, \eta$ and $\|u\|_\infty$, but not on $\eps$.

 $\bullet$ {\em Step 4}

Let $v_\eps=\varphi(u_\eps)$. We claim that, for any sufficiently small $\delta\in \ ]0, \eta[$ and, correspondingly, sufficiently small $\eps$,  $c_{v_\eps}$ cannot assume a positive  maximum on $\partial(\Omega_{\delta/2}\times\Omega_{\delta/2})\times[0, 1]$. Indeed, Proposition \ref{propKorevaar} applies to $u$, so that for any sufficiently sufficiently small $\delta$, $v=\varphi(u)$ fulfils  \eqref{derseconda} in $\Omega\setminus\Omega_\delta$ and   \eqref{sottopiani} for all $x_0\in \Omega\setminus\Omega_\delta$, $x\in \Omega\setminus\{x_0\}$. From the uniform bounds \eqref{c1est} and \eqref{c2est} proved in the previous step, we infer by Ascoli-Arzel\'a that
\[
u_\eps\to u\qquad \text{ in $C^1(\overline{\Omega_{\delta/2}})\cap C^2(\overline{\Omega_{\delta/2}}\setminus\Omega_\delta)$}.
\]
Since $u_\eps$  fulfils \eqref{lbound} for sufficiently small $\eps$  and since $\varphi\in C^{2, \alpha}_{\rm loc}(\R_+)$, the previous convergence holds true for $v_\eps$ as well. Note that $\partial\Omega_{\delta/2}\subseteq \Omega\setminus\Omega_\delta$, hence $v$ fulfils \eqref{sottopiani} for all $x_0\in\partial\Omega_{\delta/2}$ and $x\in \overline{\Omega_{\delta/2}}\setminus\{x_0\}$. Then
Proposition \ref{prostable}, applied to the family $v_\eps$ and $v$ on the strictly convex set $\Omega_{\delta/2}$, ensures that  for any sufficiently small $\eps>0$ \eqref{sottopiani} holds true for $v_\eps$ at all points $x_0\in \partial\Omega_{\delta/2}$, $x\in \overline{\Omega_{\delta/2}}\setminus\{x_0\}$.  Finally, Proposition \ref{propKorevaar2}, applied to such functions  $v_\eps$  on $\Omega_{\delta/2}$,  proves the claim.

$\bullet$ {\em Step 5}

For $\eps$ and $v_\eps$ as above and $M_f$ as in \eqref{defM}, we look at the equation fulfilled by $v_\eps$. The proof of the last statement of Lemma \ref{lemmaCj} still holds for the functional $I_\eps$ and its minimiser $u_\eps$, showing that
\[
M_\eps=\sup_\Omega u_\eps\le M_f.
\]
Let $\psi=\varphi^{-1}$, so that for $t\in \ ]0, M_\eps]$ it holds
\[
\varphi'(t)=F^{-\frac 1 p }(t), \qquad \psi'(s)=F^{\frac 1 p}(\psi(s)).
\]
We thus have $F(u_\eps)=F(\psi(v_\eps))$ and
\beq
\label{t02}
 H(Du_\eps)=H(\psi'(v_\eps)\, Dv_\eps)=F(\psi(v_\eps))\, H(Dv_\eps)
\eeq
where we used the positive $p$-homogeneity of $H$ in the last step. Similarly, by the positive $1$-homogeneity of $DH^{2/p}$
\[
(D H^{\frac{2}{p}})(Du_\eps)=\psi'(v_\eps)\,  (D H^{\frac{2}{p}})(Dv_\eps)=F^{\frac 1 p}(\psi(v_\eps))\, (D H^{\frac{2}{p}})(Dv_\eps).
\]
We look at equation \eqref{eappro} for $v_\eps$. For the left hand side we compute
\[
\begin{split}
{\rm div}\, &\left(
\big(
\eps F(u_\eps)^{\frac 2 p}+
H^{\frac{2}{p}}(\n u_\eps)
\big)^{\frac{p-2}{2}}\n \frac{H}{2}^\frac{2}{p}(\n u_\eps)
\right)\\
&={\rm div}\, \left( F^{1-\frac{1}{p}}(\psi(v_\eps))\, \big(\eps+H^{\frac{2}{p}}(Dv_\eps)\big)^{\frac{p-2}{2}}\, D \frac{H^{\frac{2}{p}}}{2}(Dv_\eps)\right)\\
&=\left(1-\frac{1}{p}\right) F^{-\frac 1 p}(\psi(v_\eps))\, f(\psi(v_\eps))\, \psi'(v_\eps)\, \big(\eps+H^{\frac{2}{p}}(Dv_\eps)\big)^{\frac{p-2}{2}}\, \left(D \frac{H^{\frac{2}{p}}}{2}(Dv_\eps), Dv_\eps\right)\\
&\quad + F^{1-\frac{1}{p}}(\psi(v_\eps))\,  {\rm div} \, \left(  \big(\eps+H^{\frac{2}{p}}(Dv_\eps)\big)^{\frac{p-2}{2}}\, D \frac{H^{\frac{2}{p}}}{2}(Dv_\eps)\right)\\
&=\left(1-\frac{1}{p}\right)  f(\psi(v_\eps))\, \big(\eps+H^{\frac{2}{p}}(Dv_\eps)\big)^{\frac{p-2}{2}}\, H^{\frac{2}{p}}(Dv_\eps)\\
&\quad + F^{1-\frac{1}{p}}(\psi(v_\eps))\,  {\rm div} \, \left(  \big(\eps+H^{\frac{2}{p}}(Dv_\eps)\big)^{\frac{p-2}{2}}\, D \frac{H^{\frac{2}{p}}}{2}(Dv_\eps)\right)
\end{split}
\]
where we used $\psi'(s)=F^{1/p}(\psi(s))$ and that, being $H^{2/p}$ positively $2$-homogeneous,
 \[
 \left(D H^{\frac{2}{p}}(Dv_\eps), Dv_\eps\right)=2\, H^{\frac{2}{p}}(Dv_\eps).
 \]
The right-hand side of \eqref{eappro} is, again by \eqref{t02},
\[
f(u_\eps)\left[1- \frac \eps  p\, \big( \eps \, F(u_\eps)^{\frac 2 p}+ H^\frac{2}{p}(\n u_\eps) \big)^{\frac{p-2}{2}} F(u_\eps)^{\frac{2-p}{p}}\right]=
 f(\psi(v_\eps))\left[1-\frac \eps  p\, \big(\eps +H^\frac{2}{p}(\n v_\eps)\big)^{\frac{p-2}{2}} \right].
 \]
 Hence $v_\eps$ satisfies
 \[
 \begin{split}
- F^{1-\frac{1}{p}}(\psi(v_\eps))\,  {\rm div} \, &\left(  \big(\eps+H^{\frac{2}{p}}(Dv_\eps)\big)^{\frac{p-2}{2}}\, D \frac{H^{\frac{2}{p}}}{2}(Dv_\eps)\right)=f(\psi(v_\eps))\left[1-\frac \eps  p\, \big(\eps +H^\frac{2}{p}(\n v_\eps)\big)^{\frac{p-2}{2}} \right]\\
&+\left(1-\frac{1}{p}\right)\, f(\psi(v_\eps))\, \big(\eps+H^{\frac{2}{p}}(Dv_\eps)\big)^{\frac{p-2}{2}}\, H^{\frac{2}{p}}(Dv_\eps)
\end{split}
\]
 which rewrites as
  \beq
  \label{efin}
  -{\rm div} \, \left(  DH_\eps(Dv_\eps)\right)= \frac{f(\psi(v_\eps))}{ F^{1-\frac 1 p }(\psi(v_\eps))} b_\eps(Dv_\eps)
  \eeq
  where
  \[
  \begin{split}
  H_\eps(z)&=\big(\eps+H^\frac{2}{p}(z)\big)^{\frac{p}{2}}\\
b_\eps(z)&= p+\big((p-1) \, H^{\frac 2 p}(z)-\eps\big)\big(\eps+H^{\frac{2}{p}}(z)\big)^{\frac{p-2}{2}}.
\end{split}
  \]

 $\bullet$ {\em Step 6}

  We finally exclude that, for $\eps$ and $v_\eps$ as above,  $c_{v_\eps}$ attains a positive maximum on $\Omega_{\delta/2}\times\Omega_{\delta/2}\times[0, 1]$.
 Note that
  \[
  b_\eps(z)\ge b(0)=p-\eps^{\frac p 2}
  \]
  which is positive for sufficiently small $\eps$.
  On the other hand, for $s\in v_\eps(\Omega_{\delta/2})\subseteq\  ]0,   \varphi(M_\eps)]$ it holds
  \[
   \frac{f(\psi(s))}{ F^{1-\frac 1 p }(\psi(s))}=\big(F^{\frac{1}{p}}\big)' (\psi(s))
   \]
   which is non-increasing since $\psi$ is non-decreasing and $F^{1/p}$ is concave,
   while
   \[
   \psi''(s)=\big(F^{\frac{1}{p}}(\psi(s))\big)'=\frac{1}{p}\, F^{1-\frac{1}{p}}(\psi(s))\, f(\psi(s))\, \psi'(s)=\frac{F(\psi(s))}{f(\psi(s))}
   \]
   so that
   \[
\frac{ F^{1-\frac{1}{p}}(\psi(s))}{f(\psi(s))}=\frac{\psi''(s)}{\psi'(s)}
  \]
   which is convex by Lemma \ref{lemmavarphi}, point 2.
 Finally, since $v_\eps\in C^2(\Omega_{\delta/2})$, \eqref{efin} rewrites as
 \[
 -{\rm Tr}\, \big(D^2H_\eps(Dv_\eps)\, D^2v_\eps)=\frac{f(\psi(v_\eps))}{ F^{1-\frac 1 p }(\psi(v_\eps))} b_\eps(Dv_\eps)
 \]
 and since $H(Dv_\eps)$ is bounded in $\Omega_{\delta/2}$, Lemma \ref{lemmastell}, point 2, grants the strong ellipticity of $D^2H_\eps(z)$ for $z\in Dv_\eps(\Omega_{\delta/2})$. Therefore Proposition \ref{propKennington} ensures that $c_{v_\eps}$ cannot attain a positive maximum on $\Omega_{\delta/2}\times\Omega_{\delta/2}\times [0, 1]$. By step 4 we conclude that, given a sufficiently small $\delta$, for any sufficiently small $\eps>0$, $c_{v_{\eps}}\le 0$ on $\Omega_{\delta/2}\times\Omega_{\delta/2}\times [0, 1]$ and taking the limit for $\eps\downarrow 0$, we infer \eqref{claimfinale}, proving the theorem.%

\appendix

\section{Ellipticity estimates}
In this appendix we prove strong ellipticity estimates for the auxiliary integrands constructed during the proof of Theorem \ref{teorema 1}, starting from a smooth, positively $p$-homogeneous $H$ obeying
\beq
\label{stimeH2}
\lambda\, |z|^{p-2}\, |v|^2\le \left(D^2H(z)\, v, v\right)\le \Lambda\, |z|^{p-2}\, |v|^2\qquad \forall z\in \R^N\setminus\{0\}, v\in \R^N.
\eeq
Their proofs are variants of \cite[Appendix A]{CoFaVa1}, where \eqref{stimeH2} is shown to be a consequence of the strong concavity of $\{H\le 1\}$.

\begin{lemma}\label{lemmastell}
Suppose $H\in C^1(\R^N)\cap C^2(\R^N\setminus\{0\})$ is positively $p$-homogeneous and fulfils for $0<\lambda\le \Lambda$ the ellipticity estimate \eqref{stimeH2}.

 Then
 \begin{enumerate}
 \item
 $H^{2/p}$ is strongly elliptic in the sense that there exists positive $\widehat\lambda$, $\widehat\Lambda$ depending only $H$ and $p$  such that for any $z, v\in \R^N$ it holds
\beq
\label{stell}
\widehat\lambda\, |v|^2\le \left(D^2 H^{2/p}(z)\, v, v\right)\le \widehat\Lambda\, |v|^2
\eeq
\item
For any $\theta>0$ the function
\[
H_\theta(z)=\left(\theta+H^{\frac{2}{p}}(z)\right)^{\frac{p}{2}}
\]
fulfils
\beq
\label{stell2}
\widetilde{\lambda}\, \left(\theta+H^{\frac{2}{p}}(z)\right)^{\frac{p-2}{2}}\, |v|^2\le \left(D^2 H_\theta(z)\, v, v\right)\le \widetilde{\Lambda}\, \left(\theta+H^{\frac{2}{p}}(z)\right)^{\frac{p-2}{2}}\, |v|^2
\eeq
for all $z, v\in \R^N$, where  $\widetilde{\lambda}, \widetilde{\Lambda}$ are positive numbers depending on $H$ and $p$ but not on $\theta$.
\end{enumerate}
\end{lemma}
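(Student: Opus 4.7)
\quad The plan is to exploit the positive $p$-homogeneity of $H$ (hence positive $2$-homogeneity of $H^{2/p}$) together with Euler-type identities to reduce the estimates to the unit sphere, and then to handle the $\theta$-perturbation in part (2) via a Cauchy--Schwarz argument on the positive semidefinite form $D^2H^{2/p}$.

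For part (1), I would first observe that $H^{2/p}\in C^2(\R^N\setminus\{0\})$ is positively $2$-homogeneous, so $D^2H^{2/p}$ is positively $0$-homogeneous and it suffices to establish \eqref{stell} for $z$ on the unit sphere $S^{N-1}$. The upper bound $\widehat\Lambda$ is then immediate by continuity and compactness. For the lower bound I would decompose a generic $v$ as $v=\alpha z+w$ with $(w,z)=0$ and, on $|z|=1$, use Euler's identity $(DH^{2/p}(z),z)=2H^{2/p}(z)$, its derivative $D^2H^{2/p}(z)z=DH^{2/p}(z)$, together with the $p$-homogeneous relations $(DH(z),z)=pH(z)$ and $(D^2H(z)z,v)=(p-1)(DH(z),v)$, to expand $(D^2H^{2/p}(z)v,v)$ as a quadratic form in $(\alpha,w)$ whose pure $\alpha^2$-coefficient simplifies exactly to $2H^{2/p}(z)$. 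Completing the square in $\alpha$ leaves a form in $w$ whose dominant term is a positive multiple of $(D^2H(z)w,w)$, controlled from below by $\lambda|w|^2$ via \eqref{stimeH2}. Compactness on $S^{N-1}$ then produces a uniform $\widehat\lambda>0$.

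For part (2), I would write $H_\theta=g\circ H^{2/p}$ with $g(s)=(\theta+s)^{p/2}$ and apply the chain rule
\[
D^2H_\theta(z)=g'(H^{2/p}(z))\,D^2H^{2/p}(z)+g''(H^{2/p}(z))\,DH^{2/p}(z)\otimes DH^{2/p}(z),
\]
with $g'(s)=\tfrac{p}{2}(\theta+s)^{p/2-1}$ and $g''(s)=\tfrac{p}{2}(\tfrac{p}{2}-1)(\theta+s)^{p/2-2}$. The key auxiliary estimate is
\[
\bigl(DH^{2/p}(z),v\bigr)^2\le 2H^{2/p}(z)\,\bigl(D^2H^{2/p}(z)v,v\bigr),
\]
which follows from Cauchy--Schwarz for the positive semidefinite bilinear form $D^2H^{2/p}(z)$ combined with $D^2H^{2/p}(z)z=DH^{2/p}(z)$ and $(D^2H^{2/p}(z)z,z)=2H^{2/p}(z)$. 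When $p\ge 2$, both terms of the Hessian are non-negative, so \eqref{stell2} follows from part (1) directly for the lower bound and from the auxiliary inequality for the upper bound. When $1<p<2$, the rank-one term has the wrong sign, and I would use the auxiliary inequality to absorb it into the main term, producing the scalar coefficient
\[
g'(u)+2u\,g''(u)=\tfrac{p}{2}(\theta+u)^{p/2-2}\bigl[\theta+(p-1)u\bigr]\qquad\text{at }u=H^{2/p}(z).
\]

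The main obstacle is arranging for $\widetilde\lambda,\widetilde\Lambda$ to be uniform in $\theta\ge 0$; this hinges on the elementary comparison $\min(1,p-1)(\theta+u)\le\theta+(p-1)u\le\max(1,p-1)(\theta+u)$, which is valid precisely because $p>1$. This yields \eqref{stell2} with constants depending only on $p$, $\widehat\lambda$, and $\widehat\Lambda$ from part (1), but not on $\theta$, as required.
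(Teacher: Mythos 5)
Your part (2) is correct and in fact a clean alternative to the paper's route: where the paper again deploys the decomposition $v=kz+t$ with $(t,DH(z))=0$ to annihilate the cross term, you instead absorb the rank-one term via the Cauchy--Schwarz inequality $(DH^{2/p}(z),v)^2\le 2H^{2/p}(z)\,(D^2H^{2/p}(z)v,v)$, which follows from $D^2H^{2/p}(z)z=DH^{2/p}(z)$ and $(D^2H^{2/p}(z)z,z)=2H^{2/p}(z)$; both arguments end up at the same scalar factor $\tfrac{\theta+(p-1)u}{\theta+u}$ and the same elementary bound $\min\{1,p-1\}\le\tfrac{\theta+(p-1)u}{\theta+u}\le\max\{1,p-1\}$.

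Part (1), however, has a genuine gap. You decompose $v=\alpha z+w$ with $(w,z)=0$. Because $H$ is not assumed even, $DH(z)$ is \emph{not} in general parallel to $z$, so $(DH(z),w)$ does not vanish and a cross term survives. After completing the square in $\alpha$, what remains in $w$ is (up to a harmless positive factor, at $H(z)=1$)
\[
\bigl(D^2H(z)\,w,w\bigr)-\frac{p-1}{p}\,\bigl(DH(z),w\bigr)^2,
\]
which is \emph{not} a positive multiple of $(D^2H(z)w,w)$: the subtracted rank-one term can, by Cauchy--Schwarz applied to the form $D^2H(z)$, be as large as the full $(D^2H(z)w,w)$, so the claimed lower bound ``controlled from below by $\lambda|w|^2$ via \eqref{stimeH2}'' does not follow. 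To repair this you would need to invoke the equality case of Cauchy--Schwarz for $D^2H(z)$ (equality forces $w\parallel z$, incompatible with $(w,z)=0$, $w\ne 0$, since $D^2H(z)>0$) together with compactness to produce a uniform positive constant. The paper sidesteps this entirely by decomposing $v=kz+t$ with $t$ orthogonal to $DH(z)$ rather than to $z$; then $(DH(z),t)=0$ kills the cross term before it arises, the quadratic form splits as $2k^2+\tfrac{2}{p}(D^2H(z)t,t)$ with no negative correction, and the comparability of $|v|^2$ with $k^2|z|^2+|t|^2$ is established separately. Either fix is fine, but as written your step is not valid.
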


\begin{proof}
From the positive $p$-homogeneity of $H$ we get
\beq
\label{pom}
p\, H(z)=\left(DH(z), z\right), \qquad p\, (p-1) H(z)=\left(D^2H(z)\, z, z\right)
\eeq
as well as
\beq
\label{pom2}
D^2H(z)\, z=(p-1)DH(z).
\eeq
Let
\[
n_z=\frac{DH(z)}{|DH(z)|}
\]
 be the exterior normal  to  the level sets of $H$. Then by \eqref{pom} for any $z\ne 0$
\[
 (z, n_z)= p\, \frac{H(z)}{|DH(z)|}\ge c\, |z|
 \]
 where
\[
 c=c(H, p)=\inf_{z \ne 0} p\, \frac{H(z)}{|DH(z)|\, |z|}=\inf_{|z|=1} p\, \frac{H(z)}{|DH(z)|}
 \]
 (where we used the $p$-positive homogeneity of $H$ and  of $|DH(z)|\, |z|$), which is finite and positive.
 In particular any $v\in \R^N$ can be uniquely written as
 \beq
 \label{v}
 v=k\, z+ t\qquad \text{ with $k\in \R$, $t\in \R^N$, $(t, n_z)=0$}.
 \eeq
 We clearly have
\[
 |v|^2\le 2\left(|t|^2+k^2\, |z|^2\right).
 \]
 On the other hand, by Schwartz inequality
\[
 |v|\ge |(v, n_z)|=|k|\, (z, n_z)\ge c\, |k|\, |z|,
 \]
while by triangle inequality and the latter estimate
\[
|t|\le |v|+|k|\, |z|\le \left(1+\frac{1}{c}\right) |v|.
\]
All in all, we have found  a constant $C=C(H, p)$ such that for any $z\ne 0$ and all $v\in \R^N$ decomposed as in \eqref{v}, it holds
 \beq
 \label{stimev}
 \frac{1}{C}\, (k^2\, |z|^2+|t|^2)\le |v|^2\le C\, (k^2\, |z|^2+|t|^2).
 \eeq
Decomposition \eqref{v} allows the following computations for $z\ne 0$.
Thanks to \eqref{pom} and $(DH(z), t)=0$, we have
 \beq
 \label{puno}
 (DH(z)\, v, v)^2=k^2 (DH(z), z)^2=p^2\,H^2(z)\,  k^2.
 \eeq
 On the other hand,
 \beq
 \label{pdue}
 \begin{split}
 (D^2H(z)\, v, v)&=k^2\, (D^2H(z)\, z, z)+2\, k\, (D^2H(z)\, z, t)+(D^2H(z)\, t, t)\\
 &=p\, (p-1)\, H(z)\, k^2 +(D^2H(z)\, t, t)
 \end{split}
 \eeq
 thanks to \eqref{pom}, \eqref{pom2} and again $(DH(z), t)=0$.

 With these tools at hand, let us prove assertion (1) of the Lemma. Being $H^{2/p}$ a positively $2$-homogeneous function, $D^2 H^{2/p}$ is $0$-homogeneous, hence it suffices to consider the case $z\in \{H=1\}$.
 We compute
 \[
 D^2 H^{\frac 2 p }(z)=\frac{2}{p}\, H^{\frac{2-p}{p}}(z)\left[\frac{2-p}{p} \, H^{-1}(z)\, DH(z)\otimes DH(z)+D^2H(z)\right]
 \]
 so that for a given   $z\in \{H=1\}$
 \beq
 \label{pzero}
 \left(D^2 H^{\frac 2 p }(z)\, v, v\right)=\frac{2}{p} \left[\frac{2-p}{p} \,(DH(z)\, v, v)^2+(D^2H(z)\, v, v)\right].
 \eeq
 Inserting \eqref{puno} and \eqref{pdue} in \eqref{pzero} gives, for any $z\in \{H=1\}$,
 \[
  \left(D^2 H^{\frac 2 p }(z)\, v, v\right)=2\, k^2+\frac{2}{p}\, (D^2H(z)\, t, t)
  \]
 and using \eqref{stimeH2} we obtain
 \[
2\,  k^2+\frac{2\, \lambda}{p}\, |z|^{p-2}\, |t|^2 \le\left(D^2 H^{\frac 2 p }(z)\, v, v\right)\le 2\,  k^2+\frac{2\, \Lambda}{p}\, |z|^{p-2}\, |t|^2.
\]
Since $|z|$ is uniformly bounded from above and below on $\{H=1\}$, the two-sided estimate \eqref{stimev} provides \eqref{stell} for $z\in \{H=1\}$, and thus for all $z\in \R^N$ by $0$-homogeneity.

To prove assertion (2), we set $\widehat{H}=H^{2/p}$, which is $2$ homogeneous and satisfies \eqref{stell}, so that
\[
H_\theta(z)=\left(\theta+\widehat{H}(z)\right)^{\frac{p}{2}}.
\]
A standard computation gives
\[
D^2H_\theta(z)=\frac{p}{2}\left(\theta+\widehat{H}(z)\right)^{\frac{p-2}{2}}\left[\frac{p-2}{2\, (\theta+\widehat{H}(z))} D\widehat H(z)\otimes D\widehat H(z)+D^2\widehat H(z)\right].
\]
For $z\ne 0$ and $v\in \R^N$,  we consider again the decomposition \eqref{v}, so that \eqref{puno} and \eqref{pdue} applied to $\widehat H$ (so that  $p=2$ therein), give
\[
\begin{split}
\left(D^2H_\theta(z)\, v, v\right)&=\frac{p}{2}\left(\theta+\widehat{H}(z)\right)^{\frac{p-2}{2}}\left[\frac{p-2}{2\, (\theta+\widehat{H}(z))}\, 4\, \widehat H^2(z)\, k^2 +2\, \widehat H(z)\, k^2+ \left(D^2H(z)\, t, t\right)\right]\\
&=p\left(\theta+\widehat{H}(z)\right)^{\frac{p-2}{2}} \left[\frac{\theta+(p-1)\, \widehat{H}(z)}{ \theta+\widehat{H}(z)}\,  \widehat H(z)\, k^2+\frac{1}{2}\left(D^2\widehat H(z)\, t, t\right)\right].
\end{split}
\]
Next note that
\[
a_p:=\min\{1, p-1\}=\min_{t\ge 0}\frac{\theta+(p-1)\,t}{ \theta+t}, \qquad b_p:=\max\{1, p-1\}=\max_{t\ge 0}\frac{\theta+(p-1)\,t}{ \theta+t}
\]
which are positive and independent of $\theta$, hence \eqref{stell} provides
\beq
\label{afin}
\begin{split}
p\left(\theta+\widehat{H}(z)\right)^{\frac{p-2}{2}}  \left[a_p\, \widehat H(z)\, k^2+\frac{\widehat\lambda}{2}\, |t|^2\right]&\le \left(D^2H_\theta (z)\, v, v\right)\\
&\le p\left(\theta+\widehat{H}(z)\right)^{\frac{p-2}{2}}\left[b_p\, \widehat H(z)\, k^2+\frac{\widehat\Lambda}{2}\, |t|^2\right]
\end{split}
\eeq
where $\widehat{\lambda}$ and $\widehat{\Lambda}$ are given in \eqref{stell}, hence are independent of $\theta$.
Since $\widehat H$ is positively $2$-homogeneous and vanishes only at the origin, we readily have
\[
\frac{1}{C}\, |z|^2\le \widehat H(z)\le C\, |z|^2
\]
for a constant $C=C(H, p)$, and using these inequalities in \eqref{afin} together with \eqref{stimev} provides \eqref{stell2} with the stated dependencies.
\end{proof}

\end{document}